\theoremstyle{plain}
\newtheorem{theor}{Theorem}
\theoremstyle{remark}
\newtheorem{rem}{Remark}
\theoremstyle{plain}
\newtheorem{prop}[theor]{Proposition}
\newtheorem{lemma}[theor]{Lemma}
\def\R{{\mathbb R}}
\def\Prob{{\mathbb P}}
\def\dist{{\rm d}}
\def\Im{{\rm Im}}
\def\Vol{{\rm Vol}}
\def\Proj{{\rm P}}
\def\proj{{\rm p}}
\def\spn{{\rm span}}
\def\sign{{\rm sign}}
\def\Sph{{\rm S}}
\def\inter{{\rm int}}
\def\Class{{\mathcal C}}
\def\RandSet{{\mathcal S}}
\def\Event{{\mathcal E}}
\def\Ill{{\mathcal I}}
\def\Gauss{\nu}
\begin{document}

\title{Illumination of convex bodies with many symmetries}
\author{Konstantin Tikhomirov}

\maketitle

\begin{abstract}
Let $n\geq C$ for a large universal constant $C>0$, and let $B$ be a convex body in $\R^n$
such that for any $(x_1,x_2,\dots,x_n)\in B$, any choice of signs
$\varepsilon_1,\varepsilon_2,\dots,\varepsilon_n\in\{-1,1\}$ and
for any permutation $\sigma$ on $n$ elements we have
$(\varepsilon_1x_{\sigma(1)},\varepsilon_2x_{\sigma(2)},\dots,\varepsilon_nx_{\sigma(n)})\in B$.
We show that if $B$ is not a cube then $B$ can be illuminated
by strictly less than $2^n$ sources of light.
This confirms the Hadwiger--Gohberg--Markus illumination conjecture for unit balls
of $1$-symmetric norms in $\R^n$ for all sufficiently large $n$.
\end{abstract}

{\bf MCS (2010):} 52A20, 
52C17 (primary).

{\bf Keywords and phrases: } Hadwiger conjecture, illumination, $1$-symmetric norm.

\section{Introduction}

Let $B$ be a convex body (i.e.\ a compact convex set with non-empty interior) in $\R^n$.
The well known problem of H.~Hadwiger \cite{H57}, independently formulated by
I.~Gohberg and A.~Markus, is to find the least number of smaller homothetic copies of $B$
sufficient to cover $B$. An equivalent question is to determine the smallest number $\Ill(B)$
of points in $\R^n\setminus B$ (``light sources'')
sufficient to illuminate $B$ \cite{H60}. Here, we say that a collection of points $\{p_1,p_2,\dots,p_m\}$ {\it illuminates} $B$
if for any point $x$ on the boundary of $B$ there is a point $p_i$ such that the line passing through $p_i$ and $x$
intersects the interior of $B$ at a point not between $p_i$ and $x$.
We refer to \cite[Chapter~VI]{BMS}, \cite[Chapter~3]{B10} and \cite{BK} for history of the question.

Following V.~Boltyanski (see, in particular, \cite[p.~256]{BMS}), we say that a boundary point $x\in B$
is {\it illuminated in a direction $y\in\R^n\setminus\{0\}$} if there is $\varepsilon>0$ such that
the point $x+\varepsilon y$ belongs to the interior of $B$. The entire body $B$ is illuminated in directions
$\{y^1,y^2,\dots,y^m\}$ if for every boundary point $x\in B$ there is $i\leq m$ such that
$x$ is illuminated in direction $y^i$. The smallest number of directions sufficient to illuminate $B$
is equal to $\Ill(B)$ (see \cite[Theorem~34.3]{BMS}).

It can be easily checked that the illumination number of an $n$-dimensional parallelotope is equal to $2^n$.
The Hadwiger--Gohberg--Markus illumination conjecture asserts that for any $n$-dimensional convex body $B$ different
from a parallelotope, $\Ill(B)<2^n$. We refer to \cite[Chapter~3]{B10} and \cite{BK}
for a list of results, confirming the conjecture in some special cases.
Here, let us mention a result of H.~Martini for so-called {\it belt polytopes} \cite{M}
and its extension to {\it belt bodies} due to V.~Boltyanski \cite{B96};
a paper of O.~Schramm \cite{S} dealing with bodies of constant width
and its generalization to {\it fat spindle bodies} by K.~Bezdek \cite{B12};
and a result of K.~Bezdek and T.~Bisztriczky \cite{BB} for dual cyclic polytopes.
For arbitrary convex bodies, the best known upper bound follows from C.A. Rogers' covering theorem \cite{R57}:
\begin{equation}\label{eq: rogers}
\Ill(B)\leq (n\log n+n\log\log n+5n)\frac{\Vol_n(B-B)}{\Vol_n(B)},
\end{equation}
where $B-B$ denotes the Minkowski difference and $\Vol_n(\cdot)$ is the standard Lebesgue volume in $\R^n$
(see, for example, \cite[Corollary~2.11]{BK} or \cite[Corollary~3.4.2]{B10}).

The aforementioned results of O.~Schramm \cite{S} and K.~Bezdek \cite{B12}
are based on a probabilistic argument in which directions of illumination are chosen
uniformly independently on the sphere $\Sph^{n-1}$ (see also \cite{Naszodi}).
Further, 
in a recent note \cite{LT}, it was shown that the general bound \eqref{eq: rogers}
can be recovered by illuminating the body $B$ with {\it independently distributed}
sources of light. It was suggested in \cite{LT} that randomized models of that type can
be helpful and may contribute towards solving the Illumination Problem.

In this note, we further develop the approach from \cite{S, B12, LT} by
applying it to convex bodies with many symmetries.
Let $n\geq 2$. We denote by $\Class_n$ the set of all convex bodies $B$ in $\R^n$ having the following properties:
\begin{enumerate}
\item[1)] $(\varepsilon_1 x_1,\varepsilon_2 x_2,\dots,\varepsilon_n x_n)\in B$ for any $(x_1,x_2,\dots,x_n)\in B$ and
any choice of signs $\varepsilon_i\in\{-1,1\}$ and
\item[2)] $(x_{\sigma(1)},x_{\sigma(2)},\dots,x_{\sigma(n)})\in B$ for any $(x_1,x_2,\dots,x_n)\in B$ and
any permutation $\sigma$ on $n$ elements.
\end{enumerate}

Note that the Minkowski functionals of convex bodies from $\Class_n$ are {\it $1$-symmetric
norms} in $\R^n$ (with respect to the standard basis),
and, conversely, the closed unit ball of any $1$-symmetric norm in $\R^n$ belongs to $\Class_n$.
The main result of this note is
\begin{theor}\label{t: main}
There is a universal constant $C>0$ with the following property: Let $n\geq C$ and let $B\in\Class_n$.
Assume that $B$ is not a cube. Then $\Ill(B)< 2^n$.
\end{theor}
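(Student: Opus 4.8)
The plan is to combine the random-direction illumination technique of Schramm--Bezdek--Livshyts--Tikhomirov with a careful analysis of the boundary structure of $1$-symmetric bodies. The key observation is that a body $B\in\Class_n$ that is not a cube must have a boundary that is ``curved'' in a quantitative sense away from a small neighborhood of the coordinate directions, so that a randomly chosen direction illuminates a large fraction of the boundary, while the small exceptional region near the vertices can be handled by a bounded number of explicitly chosen directions.

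First I would set up the probabilistic framework: for a direction $y$ chosen from some distribution on $\Sph^{n-1}$ (most likely the uniform distribution, or a Gaussian-induced one as in \cite{LT}), and for a fixed boundary point $x\in\partial B$ with outer normal cone $N(x)$, the point $x$ is illuminated in direction $y$ precisely when $\langle y,u\rangle<0$ for every $u\in N(x)$. Thus the measure of directions failing to illuminate $x$ is controlled by the ``width'' of the normal cone $N(x)$: if $N(x)$ is a ray spanned by a single unit vector $u$, then roughly half the sphere illuminates $x$, whereas at a vertex the normal cone is large and only a tiny spherical cap works. The crux is therefore to understand, for $B\in\Class_n$ not a cube, how the total boundary measure splits between ``smooth-ish'' points (small normal cone) and ``vertex-like'' points (large normal cone), and to exploit the symmetry group — the hyperoctahedral group $B_n$ of order $2^n n!$ — to reduce to understanding a single fundamental domain, e.g.\ $\{x_1\ge x_2\ge\cdots\ge x_n\ge 0\}$.

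The core estimate I would aim for: since $B$ is not a cube, there is a boundary point whose normal cone is \emph{not} aligned with a coordinate axis, i.e.\ the body is not locally a facet of a cube somewhere; by symmetrization this forces a whole orbit of such points, and moreover — using $1$-symmetry and convexity — one can show the normal directions at ``most'' boundary points have many nonzero coordinates of comparable size, so that the bad set of directions for such a point is exponentially small. Quantitatively, I expect to show that a single well-chosen random direction illuminates all boundary points except those lying in $2^n\cdot(\text{something like }n^{-c})$ small caps around the $2^n$ cube-vertex directions $(\pm1,\dots,\pm1)/\sqrt n$; then, taking $m = (1-\delta)2^n$ independent random directions for a small constant $\delta>0$, a union bound over a suitable net shows that with positive probability all of $\partial B$ is illuminated, provided we also throw in a fixed family of at most $\delta 2^n$ directions (for instance perturbations of the $2^n$ sign vectors) that deal with the neighborhoods of the vertices. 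The arithmetic has to close: the number of ``hard'' regions times the failure probability per region, raised to the power $m$, must beat the entropy of a net on each region — this is the standard but delicate balancing in \cite{S,B12,LT}.

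The main obstacle I anticipate is the dichotomy argument: proving that ``not a cube'' implies a robust, quantitative lower bound on the curvature/normal-cone spread over a set of boundary points of near-full measure. A body in $\Class_n$ could agree with the cube on huge portions of its boundary (e.g.\ the cube with slightly rounded edges), so one must locate where $B$ genuinely deviates from $[-1,1]^n$ and show this deviation, once symmetrized by $B_n$, propagates to control enough of $\partial B$ — or alternatively, show that even bodies extremely close to the cube are illuminated by fewer than $2^n$ directions because the rounded edges ``connect up'' the vertex neighborhoods so that one direction handles two adjacent vertices at once. Handling the extreme near-cube case, where the gain over $2^n$ is genuinely small, and making the constant $C$ and the gap $2^n - \Ill(B)$ explicit enough to survive the union bound, is where I expect the real work to lie; the $1$-symmetry is what makes it tractable, since it pins down the possible normal cones at vertices and edges to a short explicit list governed by the ordered coordinates $x_1\ge\cdots\ge x_n\ge 0$.
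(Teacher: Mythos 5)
Your plan has the right general flavor (randomized illumination plus exploitation of the hyperoctahedral symmetry, with a separate treatment of the near-cube regime), but as it stands it has two genuine gaps, and both are exactly where the paper's actual work lies.

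First, uniform (or Gaussian) random directions on $\Sph^{n-1}$ are the wrong distribution for this class, and your core estimate --- that away from the cube-vertex caps the set of non-illuminating directions is exponentially small --- fails. Take the cross-polytope, the unit ball of $\ell_1^n$, which lies in $\Class_n$ and is as far from the cube as possible: the vertex $e_1$ is illuminated precisely by the directions $y$ with $y_1<0$ and $|y_1|>\sum_{i\geq 2}|y_i|$, an exponentially small and very specifically located cap. With $(1-\delta)2^n$ independent uniform directions the probability of missing such a cap is a constant, so the union bound over the orbit of such points (let alone over a net) does not close; more generally, for non-strictly-convex $B\in\Class_n$ the ``bad'' boundary points are not confined to neighborhoods of the $2^n$ sign vectors. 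The paper avoids this by using discrete randomness adapted to the symmetry: for each $k\leq\lceil n/2\rceil$ it takes $\lfloor 2^n/n^2\rfloor$ samples of a random $\pm1$ vector composed with a random coordinate projection of rank $2k-1$. A boundary point $x$ whose set $J^x$ of maximal-modulus coordinates has size $k\leq\lceil n/2\rceil$ is illuminated by any such direction whose signs agree with $-\sign(x_i)$ on $J^x$ and whose support contains at most $k-1$ further coordinates, because the ordering property of the Gauss image ($|x_i|>|x_j|\Rightarrow|v_i|\geq|v_j|$) makes the $k$ negative terms dominate the at most $k-1$ positive ones; the probability of this combinatorial match is of order $(2/3)^n$ rather than $2^{-n}$, which is why $2^n/n^2$ samples per $k$ suffice and no net or measure estimate is needed --- the same realization works simultaneously for all boundary points. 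Points with $|J^x|>\lceil n/2\rceil$ are handled by the fixed set $\{-1,1\}^{n-1}\times\{0\}$ of size $2^{n-1}$, but this step uses $\dist(B,[-1,1]^n)\geq 2$.

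Second, the near-cube case, which you correctly identify as the main obstacle, is left as an intention rather than an argument, and your proposed mechanisms (quantitative normal-cone spread, or one direction serving two adjacent vertices) are not what makes it work. The paper's dichotomy is by the distance to the cube: if $1\neq\dist(B,[-1,1]^n)<2$, a purely deterministic argument shows that either $B$ is illuminated by the $2^n-1$ directions consisting of all sign vectors having a $-1$ among the first $n-1$ coordinates together with the single extra direction $e_1+\cdots+e_{n-1}$, or else $\|e_i+e_j\|_B>\|e_i\|_B$ for $i\neq j$, in which case $2^{n-1}+2$ directions of the form $\bigl(\{-1,1\}^{n-1}\times\{0\}\bigr)\cup\{\pm e_n\}$ suffice. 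The mechanism is to drop or flip one coordinate of the sign vector and rule out the resulting supporting hyperplanes using the Gauss-image lemmas and the assumption $\dist<2$; no curvature, measure, or union-bound considerations enter. Without an argument of this kind your count cannot close: for a cube with slightly rounded edges there are $2^n$ vertex-like regions, each illuminated only by directions in a small cap around the corresponding $-(\varepsilon_1,\dots,\varepsilon_n)$, so reserving only $\delta 2^n$ fixed directions for them begs the question.
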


\bigskip

Let us make some remarks. In the paper \cite{S} of O.~Schramm, it was proved that, given a group $G$
of orthogonal transformations of $\R^n$ which is generated
by reflections through hyperplanes and acts irreducibly on $\R^n$ (i.e.\ has no non-trivial invariant subspaces),
and a strictly convex body $B$ invariant under the action of $G$, we have $\Ill(B)=n+1$.
The group of orthogonal transformations generated by permutations of the standard basis
vectors and reflections with respect to coordinate hyperplanes, acts irreducibly on $\R^n$.
Hence, the result of O.~Schramm implies that for any strictly convex body $B\in\Class_n$ we have $\Ill(B)=n+1$.
However, the theorem of O.~Schramm gives no information about polytopes and, more generally,
bodies which are not strictly convex.

The proof of Theorem~\ref{t: main} is split into two parts. In the first part (Section~\ref{s: det}),
we illuminate bodies $B\in\Class_n$ with a small distance to the cube (to be defined below), using
purely deterministic arguments.
In the second part (Section~\ref{s: rand}), we construct a special set of random directions which
illuminate any given $B\in\Class_n$ with a ``large'' distance to the cube.

\section{Preliminaries}

Let us start with notation and basic definitions.
Given a finite set $I$, by $|I|$ we denote its cardinality. For any natural $k$,
we write $[k]$ instead of $\{1,2,\dots,k\}$. For a real number $r$, $\lfloor r\rfloor$
denotes the largest integer not exceeding $r$, and $\lceil r\rceil$ --- the smallest integer greater or equal to $r$.

Let $n$ be a natural number.
For a vector $x=(x_1,x_2,\dots,x_n)\in\R^n$, let
$$I_0^x:=\bigl\{i\leq n:\,x_i=0\bigr\}.$$
The standard basis vectors in $\R^n$ will be denoted by $e_1,e_2,\dots,e_n$ and the
standard inner product --- by $\langle\cdot,\cdot\rangle$. The maximum ($\ell_\infty^n$) norm
in $\R^n$ will be denoted by $\|\cdot\|_\infty$.
Given a convex body $B$ in $\R^n$, by $\partial B$ we denote its {\it boundary}, and by $\inter(B)$ its {\it interior}.
If $0\in \inter(B)$ then the {\it Minkowski functional} $\|\cdot\|_B$ on $\R^n$ is defined by
$$\|y\|_B:=\inf\bigl\{\lambda>0:\,y\in\lambda B\bigr\},\;\;\;y\in\R^n.$$

Further, for a convex body $B$ in $\R^n$ and a point $x\in\partial B$, let
{\it the Gauss image} $\Gauss(B,x)$ be the set of all outer normal unit vectors for supporting
hyperplanes at $x$. In other words, $\Gauss(B,x)$ is the set of all vectors $v\in\Sph^{n-1}$ such that
$\langle v,y-x\rangle\leq 0$ for all $y\in B$.
We omit the proof of the next lemma (see, for example, \cite[Lemma~4]{S} for an equivalent statement):
\begin{lemma}\label{l: trivial criterion}
Given a convex body $B$ in $\R^n$ ($n\geq 2$),
a direction $y\in\R^n\setminus\{0\}$ illuminates $x\in\partial B$ if and only if $\langle y,v\rangle<0$
for all $v\in \Gauss(B,x)$.
\end{lemma}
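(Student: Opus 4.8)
The plan is to establish the two implications separately, using only the definitions of illumination and of the Gauss image together with the standard separation theorem for convex sets.

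For the forward implication, suppose $y$ illuminates $x$, i.e.\ $x+\varepsilon y\in\inter(B)$ for some $\varepsilon>0$, and let $v\in\Gauss(B,x)$ be arbitrary. Since $x+\varepsilon y$ lies in the interior of $B$, some ball of radius $\delta>0$ centred at it is contained in $B$; in particular $x+\varepsilon y+\delta v\in B$, as $\|v\|=1$. Substituting this point into the defining inequality $\langle v,z-x\rangle\le 0$ of $\Gauss(B,x)$ gives $\varepsilon\langle v,y\rangle+\delta\le 0$, whence $\langle y,v\rangle\le-\delta/\varepsilon<0$. Note that interiority is genuinely used here: without the ball one would only obtain the non-strict inequality $\langle y,v\rangle\le 0$.

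For the reverse implication I argue by contraposition. Suppose $y$ does not illuminate $x$; by definition this means $x+\varepsilon y\notin\inter(B)$ for every $\varepsilon>0$, and since $x\in\partial B$ also $x\notin\inter(B)$, so the closed half-line $L:=\{x+\varepsilon y:\varepsilon\ge 0\}$ is disjoint from the open convex set $\inter(B)$. Applying the separation theorem to $\inter(B)$ and $L$, we obtain $w\in\R^n\setminus\{0\}$ and $c\in\R$ with $\langle w,z\rangle<c$ for all $z\in\inter(B)$ and $\langle w,z\rangle\ge c$ for all $z\in L$. Passing to closures (and using $B=\overline{\inter(B)}$) yields $\langle w,z\rangle\le c$ for every $z\in B$; in particular $\langle w,x\rangle\le c$, while $\langle w,x\rangle\ge c$ because $x\in L$, so $\langle w,x\rangle=c$. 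Hence $v:=w/\|w\|\in\Sph^{n-1}$ satisfies $\langle v,z-x\rangle\le 0$ for all $z\in B$, i.e.\ $v\in\Gauss(B,x)$. Finally, from $\langle w,x+\varepsilon y\rangle\ge c=\langle w,x\rangle$ for all $\varepsilon\ge 0$ we get $\langle w,y\rangle\ge 0$, hence $\langle y,v\rangle\ge 0$. Thus $v\in\Gauss(B,x)$ violates the inequality $\langle y,v\rangle<0$, which is exactly what contraposition requires.

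The argument is elementary throughout; the only two points deserving attention are the strictness upgrade in the forward direction (handled by interiority, as noted) and the choice of which separation statement to invoke in the backward direction. One must separate the \emph{open} set $\inter(B)$ from the half-line $L$ --- rather than, say, $B$ from the single point $x+\varepsilon y$ --- since it is precisely this choice that simultaneously produces a supporting functional of $B$ at $x$ and pins down its sign along the direction $y$.
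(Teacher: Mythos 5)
Your proof is correct and complete: the forward direction via an interior ball gives the needed strict inequality, and the reverse direction via separating the open set $\inter(B)$ from the half-line $\{x+\varepsilon y:\varepsilon\ge 0\}$ produces an outer normal $v\in\Gauss(B,x)$ with $\langle y,v\rangle\ge 0$, which is exactly the contrapositive. The paper omits the proof of this lemma (citing an equivalent statement in \cite{S}), and your separation-theorem argument is the standard one intended there, so there is nothing substantive to contrast.
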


Let $n\geq 2$ and let the class $\Class_n$ be defined as in the Introduction.
It is easy to see that, given a body $B\in\Class_n$ and a vector
$(x_1,x_2,\dots,x_n)\in B$, we have $(\alpha_1 x_1,\alpha_2 x_2,\dots,\alpha_n x_n)\in B$
for any $\alpha_i\in[-1,1]$.
Hence, the following holds:
\begin{lemma}\label{l: always positive}
For any $B\in\Class_n$ ($n\geq 2$), any $(x_1,x_2,\dots,x_n)\in \partial B$ and $(v_1,v_2,\dots,v_n)\in \Gauss(B,x)$
we have $x_i v_i\geq 0$ for all $i\leq n$.
\end{lemma}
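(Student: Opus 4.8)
The plan is to reduce the claim to the observation, stated just before the lemma, that membership in $B\in\Class_n$ is preserved under coordinatewise multiplication by scalars $\alpha_i\in[-1,1]$. Fix $x=(x_1,\dots,x_n)\in\partial B$, a normal $v=(v_1,\dots,v_n)\in\Gauss(B,x)$, and an index $i\leq n$; I want to show $x_iv_i\geq 0$. The defining property of the Gauss image gives $\langle v,y-x\rangle\leq 0$, i.e.\ $\langle v,y\rangle\leq \langle v,x\rangle$, for every $y\in B$. The idea is to plug in a cleverly chosen $y$ obtained from $x$ by the coordinatewise-contraction operation, so that the inequality collapses to a statement about the single term $x_iv_i$.

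Concretely, I would take $y$ to be the vector equal to $x$ in every coordinate except the $i$-th, where we replace $x_i$ by $-x_i$; that is, $y=(x_1,\dots,x_{i-1},-x_i,x_{i+1},\dots,x_n)$. Since this is a coordinatewise multiplication of $x$ by signs $\alpha_j\in\{-1,1\}\subset[-1,1]$, property~1) of $\Class_n$ (or the remark preceding the lemma) gives $y\in B$. Then
\begin{equation*}
\langle v,y-x\rangle=v_i\bigl((-x_i)-x_i\bigr)=-2x_iv_i\leq 0,
\end{equation*}
which yields $x_iv_i\geq 0$, as desired. Since $i$ was arbitrary, the conclusion holds for all $i\leq n$.

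There is essentially no obstacle here: the only point requiring a (trivial) justification is that the sign-flip vector $y$ lies in $B$, which is exactly property~1) in the definition of $\Class_n$, and that $\Gauss(B,x)$ consists of outer normals, which is the definition recalled in the Preliminaries. One might alternatively argue via the Minkowski functional or via Lemma~\ref{l: trivial criterion}, but the direct substitution above is the cleanest route. (Note that the lemma is stated only for $x\in\partial B$ and $v\in\Gauss(B,x)\subset\Sph^{n-1}$, so $v$ is in particular a genuine outer normal and the displayed inequality is available; no assumption such as $0\in\inter(B)$ is needed for this step, though it holds automatically for bodies in $\Class_n$ that are not degenerate.)
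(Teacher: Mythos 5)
Your proof is correct, and since the paper explicitly omits the proof of this lemma as ``straightforward,'' your argument is just the natural filling-in of that omission: flip the sign of the $i$-th coordinate of $x$ to get $y\in B$ (by the sign-symmetry defining $\Class_n$), then apply the defining inequality of the Gauss image to $y$, which collapses to $-2x_iv_i\leq 0$. This is almost certainly the intended argument.
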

Again, the proof of Lemma~\ref{l: always positive} is straightforward, and we omit it.
\begin{lemma}\label{l: ordering}
Let $B\in\Class_n$ ($n\geq 2$) and let $x=(x_1,x_2,\dots,x_n)\in\partial B$.
Then for all $i,j\leq n$ such that $|x_i|>|x_j|$, we have $|v_i|\geq |v_j|$ for any $(v_1,v_2,\dots,v_n)\in \Gauss(B,x)$.
\end{lemma}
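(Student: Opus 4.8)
The plan is to exploit the symmetry (property 2) of $B$ together with the characterization of the Gauss image as a set of supporting normals, reducing everything to a two-dimensional inequality. Fix $x=(x_1,\dots,x_n)\in\partial B$ and $v=(v_1,\dots,v_n)\in\Gauss(B,x)$, and fix indices $i,j$ with $|x_i|>|x_j|$. By Lemma~\ref{l: always positive} we know $x_iv_i\ge 0$ and $x_jv_j\ge 0$; in particular, if $v_j=0$ there is nothing to prove, so assume $v_j\ne 0$, hence $x_j\ne 0$ (were $x_j=0$, then $x_jv_j=0$ is automatic but we would need a separate short argument — actually if $x_j=0<|x_i|$ we can still run the argument below with the transposed point, so let me not worry about it yet). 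The key move is to apply the defining inequality $\langle v,y-x\rangle\le 0$ to the point $y=\tau_{ij}x$ obtained from $x$ by swapping the $i$-th and $j$-th coordinates; by property 2), $y\in B$. Then
\[
0\ge \langle v, y-x\rangle = v_i(x_j-x_i)+v_j(x_i-x_j) = (x_i-x_j)(v_j-v_i).
\]
Thus $(x_i-x_j)(v_i-v_j)\ge 0$. This is almost what we want but only compares signed quantities, so I also apply the inequality to the point $y'$ obtained by swapping the $i$-th and $j$-th coordinates and simultaneously flipping the sign of the (new) $i$-th coordinate — more systematically, I would run the same computation over all four sign-and-swap images $(\pm x_j,\pm x_i)$ placed in positions $(i,j)$, each of which lies in $B$ by properties 1) and 2).

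Carrying this out: applying $\langle v,y-x\rangle\le 0$ to the four points that agree with $x$ off coordinates $i,j$ and whose $(i,j)$-entries are $(\varepsilon x_j,\delta x_i)$ for $\varepsilon,\delta\in\{-1,1\}$ gives the four inequalities
\[
\varepsilon v_i x_j + \delta v_j x_i \le v_i x_i + v_j x_j,\qquad \varepsilon,\delta\in\{-1,1\}.
\]
Taking $\varepsilon=-1$ and then using $x_iv_i\ge 0$, $x_jv_j\ge 0$, and $|x_i|>|x_j|$, one extracts $|v_j|\,|x_i|\le |v_i|\,|x_i|$ after combining the $\delta=1$ and $\delta=-1$ cases; dividing by $|x_i|>0$ yields $|v_i|\ge|v_j|$. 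Concretely, adding the $(\varepsilon,\delta)=(-1,1)$ and $(-1,-1)$ inequalities gives $-2v_ix_j\le 2v_ix_i$, i.e. $v_i(x_i+x_j)\ge 0$ and $v_i(x_i-x_j)\ge 0$, hence $|v_i|\,|x_i|\ge |v_i|\,|x_j|$ trivially — so instead I should subtract: the right comparison is to pair the swap inequality $(x_i-x_j)(v_i-v_j)\ge 0$ with its sign-flipped analogue $(x_i+x_j)(v_i+v_j)\ge 0$ (obtained from the image with $(x_i,x_j)\mapsto(-x_j,-x_i)$ in positions... ), and then observe that adding them yields $x_iv_i - x_jv_j + (x_iv_i - x_jv_j)\ge$ cross terms; expanding, $(x_i-x_j)(v_i-v_j)+(x_i+x_j)(v_i+v_j)=2x_iv_i+2x_jv_j\ge \ldots$ — this shows the bookkeeping needs care. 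The clean statement to aim for is: from the four inequalities one gets both $v_i(x_i-x_j)\ge v_j(x_i-x_j)$-type relations and their reflections, and since $x_iv_i\ge 0$, $x_jv_j\ge 0$ with $|x_i|>|x_j|$, the only consistent conclusion is $|v_i|\ge |v_j|$.

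The main obstacle I anticipate is precisely this sign bookkeeping: one must combine the right subset of the (at most) four swap/reflection inequalities so that the strictly larger quantity $|x_i|$ ends up multiplying $(|v_i|-|v_j|)$ with a favourable sign, while the terms involving $|x_j|$ and the already-known sign conditions $x_iv_i\ge0$, $x_jv_j\ge0$ are used only to absorb error terms. A secondary, minor point is the degenerate case $x_j=0$ (still allowed since we only assume $|x_i|>|x_j|$): there one applies the inequality to the image with $x_i$ moved into slot $j$ and $0$ into slot $i$, getting $v_j x_i \le v_i\cdot 0 + (\text{unchanged terms})$ after cancellation, which combined with the sign of $v_jx_j$... — in fact when $x_j=0$ the claim $|v_i|\ge|v_j|=|v_j|$ with $v_j$ possibly nonzero still needs the swap argument, and the computation $(x_i-0)(v_i-v_j)\ge 0$ together with $(x_i+0)(v_i+v_j)\ge 0$ gives $v_i\ge v_j$ and $v_i\ge -v_j$, i.e. $v_i\ge|v_j|$; since $x_iv_i\ge0$ and we may assume $x_i>0$, this is $|v_i|\ge|v_j|$, as desired. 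So the degenerate case actually falls out of the same two inequalities, and the whole proof is: invoke property 2) (and 1) for the signs) to put $\tau_{ij}x$ and its reflection in $B$, write down $\langle v,\tau_{ij}x-x\rangle\le 0$ and $\langle v,(\text{signed }\tau_{ij})x-x\rangle\le 0$, simplify to $(x_i-x_j)(v_i-v_j)\ge 0$ and $(x_i+x_j)(v_i+v_j)\ge 0$, add, and finish with $|x_i|>|x_j|$.
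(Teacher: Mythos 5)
Your approach is essentially the same as the paper's: both apply the defining inequality $\langle v,y-x\rangle\le 0$ to images of $x$ obtained by transposing coordinates $i,j$ and possibly flipping signs, all of which lie in $B$ by properties 1) and 2). The paper does it in one shot by choosing the single sign pattern $y_i=\varepsilon_jx_j$, $y_j=\varepsilon_ix_i$ with $\varepsilon_ix_iv_j\ge0$ and $\varepsilon_jx_jv_i\ge0$, which together with Lemma~\ref{l: always positive} immediately yields $\langle y,v\rangle-\langle x,v\rangle=(|x_i|-|x_j|)(|v_j|-|v_i|)$ and hence the claim by contradiction; you instead derive the two inequalities $(x_i-x_j)(v_i-v_j)\ge 0$ and $(x_i+x_j)(v_i+v_j)\ge 0$ and then combine them. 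That pair does suffice, so the idea is sound.

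However, your final instruction to \emph{add} the two inequalities is a genuine slip: adding gives $2(x_iv_i+x_jv_j)\ge 0$, which is exactly Lemma~\ref{l: always positive} and carries no new information. The correct way to finish, which you in fact carried out correctly in your $x_j=0$ discussion, is to use the two inequalities \emph{separately}. Since $|x_i|>|x_j|$, both $x_i-x_j$ and $x_i+x_j$ are nonzero and share the sign of $x_i$; hence each inequality forces $v_i-v_j$ and $v_i+v_j$ to have the same sign as $x_i$ (or vanish). Combined with $\sign(v_i)=\sign(x_i)$ when $v_i\ne0$ (again from Lemma~\ref{l: always positive}), this says that $v_i-v_j$ and $v_i+v_j$ are both $\ge 0$ if $x_i>0$, or both $\le 0$ if $x_i<0$, and in either case $|v_i|\ge|v_j|$. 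Tightening the write-up to say ``use each inequality separately'' instead of ``add'' closes the gap; the argument is then correct and, modulo organization, matches the paper's.
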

\begin{proof}
Assume the opposite: let $B\in\Class_n$, a vector $x=(x_1,x_2,\dots,x_n)\in\partial B$ and
$v=(v_1,v_2,\dots,v_n)\in \Gauss(B,x)$ be such that for some $i,j\leq n$ we have
$|x_i|>|x_j|$ and $|v_i|< |v_j|$.
Obviously,
$$H:=\bigl\{z\in\R^n:\,\langle z,v\rangle=\langle x,v\rangle\bigr\}$$
is a supporting hyperplane for $B$. Let $\varepsilon_i,\varepsilon_j\in\{-1,1\}$ be such that
$\varepsilon_i x_i v_j,\varepsilon_j x_j v_i\geq 0$, and denote
$$y:=\sum\limits_{k\neq i,j}x_ke_k+\varepsilon_i x_i e_j+\varepsilon_jx_j e_i.$$
Then
\begin{align*}
\langle y,v\rangle&=\langle x,v\rangle
+|x_i v_{j}|+|x_j v_i|-x_iv_i-x_jv_j\\
&=|\langle x,v\rangle|+(|x_i|-|x_j|)(|v_j|-|v_i|)\\
&>|\langle x,v\rangle|.
\end{align*}
Thus, $y$ cannot belong to $B$, contradicting the definition of the class $\Class_n$.
\end{proof}

Given two convex bodies $B$ and $\widetilde B$ in $\Class_n$, we define the {\it distance}
$\dist(B,\widetilde B)$ between $B$ and $\widetilde B$ as
$$\dist(B,\widetilde B)=\inf\bigl\{\lambda\geq 1:\,B\subset r\widetilde B\subset\lambda B\;\;\mbox{ for some }r>0\bigr\}.$$
In particular, $\dist(B,[-1,1]^n)$ is equal to the ratio $\|e_1+e_2+\dots+e_n\|_B/\|e_1\|_B$.
Note that $\dist(B,\widetilde B)$ is different from the {\it Banach--Mazur distance} between convex bodies.

\section{Illumination of convex bodies with a small distance to the cube}\label{s: det}

In this section, we consider the problem of illuminating a set $B\in\Class_n$
with a small distance to the cube.
Here, our construction is purely deterministic.
We prove the following:
\begin{prop}\label{p: det}
Let $B\in\Class_n$ ($n\geq 2$) with $1\neq \dist(B,[-1,1]^n)<2$. Then at least one of the following is true:
\begin{enumerate}
\item[1)] $B$ can be illuminated in directions
$$\bigl\{(\varepsilon_1,\varepsilon_2,\dots,\varepsilon_n)\in\{-1,1\}^n:\,\exists i\leq n-1\mbox{ with }\varepsilon_i=-1\bigr\}
\cup\{e_1+e_2+\dots+e_{n-1}\}.$$
\item[2)] $B$ can be illuminated in directions
$$\bigl(\{-1,1\}^{n-1}\times \{0\}\bigr)\cup \{\pm e_n\}.$$
\end{enumerate}
\end{prop}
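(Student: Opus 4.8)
The plan is to analyze the Gauss images of boundary points of $B$ and split into cases according to how far $B$ is from the cube, measured via $t:=\dist(B,[-1,1]^n)\in(1,2)$. Normalize so that $\|e_1\|_B=1$; then $\|e_1+\dots+e_n\|_B=t$, so $[-1,1]^n\subset B\subset t[-1,1]^n$, and more importantly $e_1+\dots+e_n$ lies on the boundary of $tB$ while each $e_i\in\partial B$ (using that $[-1,1]^n\subset B$ and Lemma~\ref{l: ordering}-type monotonicity of the norm). The key structural input is Lemma~\ref{l: always positive} and Lemma~\ref{l: ordering}: at a boundary point $x$, every outer normal $v$ satisfies $x_iv_i\ge 0$ and $|v_i|\ge|v_j|$ whenever $|x_i|>|x_j|$. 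Combined with Lemma~\ref{l: trivial criterion}, to illuminate $x$ in direction $y$ we need $\langle y,v\rangle<0$ for every $v\in\Gauss(B,x)$.

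I would first observe that option 1) handles boundary points $x$ that have ``enough spread'': if $x$ has at least two coordinates of distinct absolute value, or more precisely if $x$ is not (up to signs) a multiple of some $\sum_{i\in S}e_i$, then the direction $-\sign(x)$ (i.e.\ $(\varepsilon_i)$ with $\varepsilon_i=-\sign(x_i)$, and $\varepsilon_i$ chosen freely, say $+1$, on the zero set $I_0^x$) strictly illuminates $x$ since $\langle -\sign(x),v\rangle=-\sum_i|v_i|\sign(x_i)\sign(x_i)=-\sum_{i\notin I_0^x}|v_i|<0$, provided some $v_i\ne 0$ off the zero set — one must check $\Gauss(B,x)$ never consists solely of vectors supported on $I_0^x$, which follows because such a vector would be an outer normal making $x$ interior to a face, contradicting $0\in\inter B$ and the $1$-symmetry. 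The only sign vectors missing from the set in 1) are $(1,1,\dots,1)$ and $(-1,-1,\dots,-1)$; by central symmetry of $B$ it suffices to worry about boundary points whose unique (up to the zero set) illuminating sign direction is $+(1,\dots,1)$, i.e.\ points $x$ with all nonzero coordinates positive, and we must illuminate those using the extra direction $e_1+\dots+e_{n-1}$. A point $x\ge 0$ on $\partial B$ with $x_n$ \emph{not} among its largest coordinates has, by Lemma~\ref{l: ordering}, an outer normal with $v_n>0$ strictly smaller than... — actually the clean statement is: if $x\ge 0$, $x\in\partial B$, and $x$ is illuminated neither by any $\varepsilon\ne(1,\dots,1)$ nor by $e_1+\dots+e_{n-1}$, then $\langle e_1+\dots+e_{n-1},v\rangle\ge 0$ for some $v\in\Gauss(B,x)$ with all $v_i\ge 0$, forcing (after using $1$-symmetry and $\sum_{i<n}v_i\ge 0$ together with $\sum_i|v_i|$ normalized) $v=e_n$ up to scaling, hence $x$ is a point whose only supporting hyperplane is $\{x_n=\text{const}\}$; such points, by the sandwich $[-1,1]^n\subset B\subset t[-1,1]^n$, have $x_n=\|x\|_\infty\le t$ and the facet $\{x_n=\|e_n\|_B^{-1}\}=\{x_n=1\}$ of the cube, so actually this face lies on $\partial([-1,1]^n)$; here is where the hypothesis $\dist(B,[-1,1]^n)<2$, i.e.\ $t<2$, must be used to rule case 1) out and pass to case 2).

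For option 2), I would illuminate the complementary régime: $B$ agrees with the cube along the ``extreme'' facets, so that all of $\partial B$ except the top and bottom facets $\{x_n=\pm 1\}$ is illuminated by $(\{-1,1\}^{n-1}\times\{0\})$ — a boundary point $x$ with $|x_n|<\|x\|_\infty$ (so $x_n$ is not a largest coordinate) has, by Lemma~\ref{l: ordering}, outer normals $v$ with $v_n$ \emph{strictly} dominated, and one checks $\langle(-\sign(x_1),\dots,-\sign(x_{n-1}),0),v\rangle=-\sum_{i<n}|v_i|<0$ because not all of $v_1,\dots,v_{n-1}$ vanish (if they did, $v=\pm e_n$, but then $x_n=\pm\|x\|_\infty$, contradiction). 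The remaining boundary points, on the facets $x_n=\pm\|x\|_\infty$, are illuminated by $\mp e_n$ unless $e_n\in\Gauss(B,x)$ is a genuine outer normal \emph{and} these facets are not flat — but by the case split we are in, $B$ coincides with the cube on a neighborhood of these facets, so $\pm e_n$ illuminates every such point; here again $t<2$ enters, this time to guarantee the dichotomy is exhaustive (the two cases correspond to whether $\|e_1+\dots+e_{n-1}\|_B$ equals $n-1$ times... — concretely, whether $B$ is cube-like in its largest few coordinates or not).

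The main obstacle I anticipate is the careful bookkeeping at points $x$ with a nontrivial zero set $I_0^x$ and at points lying on lower-dimensional faces, where $\Gauss(B,x)$ is a whole spherical polytope rather than a single vector: one must verify that the proposed direction $y$ has $\langle y,v\rangle<0$ \emph{simultaneously} for all $v$ in that set, which reduces to checking it at the extreme normals and using Lemmas~\ref{l: always positive} and~\ref{l: ordering} to control their sign patterns and coordinate orderings. The second delicate point is pinning down exactly which inequality on $\dist(B,[-1,1]^n)$ forces the clean dichotomy between 1) and 2); I expect it to come down to comparing $\|e_n\|_B$, $\|e_1+e_2\|_B$ and $\|e_1+\dots+e_{n-1}\|_B$ and showing that $t<2$ prevents $B$ from being ``cube-like'' in a way that would defeat \emph{both} direction sets at once.
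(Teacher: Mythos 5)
Your sketch never locates the actual pivot of the dichotomy, and it contains a concrete error about the set in option~1). The two sign vectors \emph{absent} from $T_1:=\bigl\{\varepsilon\in\{-1,1\}^n:\,\exists\,i\leq n-1,\ \varepsilon_i=-1\bigr\}\cup\{e_1+\dots+e_{n-1}\}$ are $(1,1,\dots,1,1)$ and $(1,1,\dots,1,-1)$ — \emph{not} $(1,\dots,1)$ and $(-1,\dots,-1)$ as you claim. In particular $(-1,\dots,-1)\in T_1$, so a boundary point with all coordinates positive is trivially illuminated by $T_1$; the genuinely problematic points are those whose first $n-1$ coordinates are all \emph{negative} (forcing $-\sign(x)$ to start with $n-1$ ones). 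Your ``by central symmetry it suffices to worry about $x\ge 0$'' step therefore points at the wrong boundary points, and everything downstream of it analyzes the wrong configuration.

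More importantly, you never isolate the condition that actually drives the case split. The paper's proof factors through a clean intermediate statement: either $T_1$ illuminates $B$, or $\|e_i+e_j\|_B>\|e_i\|_B$ for $i\neq j$ (Lemma~\ref{l: T1}); and the latter condition alone guarantees that $T_2:=(\{-1,1\}^{n-1}\times\{0\})\cup\{\pm e_n\}$ illuminates $B$ (Lemma~\ref{l: T2}). The hypothesis $\dist(B,[-1,1]^n)<2$ is used only once, via Lemma~\ref{l: norm implication}, to rule out the subcase $I_0^x\neq\emptyset$ for points not illuminated by $T_1$; the rest of the argument pushes through to extract the inequality $\|e_k+e_j\|_B>1$ at a putative unilluminated point with $I_0^x=\emptyset$. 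Your proposal repeatedly gestures at ``here is where $t<2$ must enter'' and ``I expect it to come down to comparing $\|e_n\|_B$, $\|e_1+e_2\|_B$, $\|e_1+\dots+e_{n-1}\|_B$'' without resolving it — you correctly sense that some inequality between one- and two-coordinate norms governs the dichotomy, but you never pin it down, and without that the two halves of the proposition do not cover all of $\Class_n\cap\{1\neq\dist(\,\cdot\,,[-1,1]^n)<2\}$. As written this is a reconnaissance of the problem, not a proof: it reuses the right background lemmas (\ref{l: trivial criterion}, \ref{l: always positive}, \ref{l: ordering}) but misses the missing-vector bookkeeping for $T_1$ and the load-bearing condition $\|e_i+e_j\|_B>\|e_i\|_B$ entirely.
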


Note that the first set in the above statement has cardinality $2^n-1$, and the second --- $2^{n-1}+2$.
The proposition is obtained as an easy corollary of Lemmas~\ref{l: T1} and~\ref{l: T2} given below.
But first, let us prove
\begin{lemma}\label{l: norm implication}
Let $B\in\Class_n$ ($n\geq 2$) and let $x=(x_1,x_2,\dots,x_n)\in\partial B$.
Further, let $y\in\{-1,0,1\}^n$ be a vector such that 1) $I_0^y\subset I_0^x$ and
2) for any $i\leq n$ such that $x_i\neq 0$, we have $y_i=-\sign(x_i)$.
Finally, assume that $x$ is not illuminated in the direction $y$. Then necessarily
$$\Bigl\|\sum_{i\in [n]\setminus I_0^y}e_i\Bigr\|_B\geq \frac{2}{\|x\|_\infty}.$$
\end{lemma}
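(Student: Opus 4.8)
The plan is to argue by contradiction: suppose $x$ is not illuminated in direction $y$, yet $\bigl\|\sum_{i\in[n]\setminus I_0^y}e_i\bigr\|_B < 2/\|x\|_\infty$. By Lemma~\ref{l: trivial criterion}, non-illumination means there is some $v=(v_1,\dots,v_n)\in\Gauss(B,x)$ with $\langle y,v\rangle\geq 0$. The idea is to use the hypotheses on $y$ together with Lemma~\ref{l: always positive} to extract sign information, and then test the supporting hyperplane at $x$ against the point $\|x\|_\infty\sum_{i\in[n]\setminus I_0^y}e_i$ (suitably sign-adjusted), which lies in $B$ by the assumed norm bound and membership in $\Class_n$.

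Here is the sequence of steps. First I would note that for $i\notin I_0^y$ we have $x_i\neq 0$ (since $I_0^y\subset I_0^x$, equivalently $[n]\setminus I_0^x\subset[n]\setminus I_0^y$; actually one wants $i\notin I_0^y\Rightarrow i\notin I_0^x$, which is exactly $I_0^y\subset I_0^x$), so $y_i=-\sign(x_i)$, hence $y_iv_i = -\sign(x_i)v_i = -|v_i|$ using $x_iv_i\geq 0$ from Lemma~\ref{l: always positive}. For $i\in I_0^y$ we have $y_i=0$. Therefore $\langle y,v\rangle = -\sum_{i\in[n]\setminus I_0^y}|v_i|$, and the non-illumination condition $\langle y,v\rangle\geq 0$ forces $v_i=0$ for all $i\in[n]\setminus I_0^y$. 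Combined with Lemma~\ref{l: ordering}: for $i\in[n]\setminus I_0^y$ we have $x_i\neq 0$, so $|x_i|>|x_j|$ for any $j$ with $x_j=0$; but then $|v_i|\geq|v_j|$ would be needed, and since $|v_i|=0$ this gives $v_j=0$ for all $j\in I_0^x$. Wait — more carefully, the point is simply that $v$ is nonzero (it is a unit vector), so some coordinate $v_j\neq 0$, and that index $j$ must lie in $I_0^y$ and in fact have $x_j=0$; now pick any $i\in[n]\setminus I_0^y$ (such $i$ exists since $y\neq 0$ because $x\neq 0$), so $|x_i|>0=|x_j|$, contradicting Lemma~\ref{l: ordering} which demands $|v_i|\geq|v_j|>0=|v_i|$. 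This already yields the contradiction without even using the norm bound fully — so I should double-check the edge case where $[n]\setminus I_0^y$ is empty, i.e.\ $x=0$, which is impossible for $x\in\partial B$ with $0\in\inter(B)$; and the case where $v$ is supported entirely on $[n]\setminus I_0^y$, which we just ruled out.

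Reexamining: the contradiction above seems to show $x$ is \emph{always} illuminated in direction $y$, which is too strong — so the norm bound must enter. The resolution is that $\Gauss(B,x)$ may contain vectors $v$ with $v_i\neq 0$ for some $i\in[n]\setminus I_0^y$, and for these $\langle y,v\rangle<0$ automatically; the problematic $v$ are those vanishing on $[n]\setminus I_0^y$. For such $v$, write $z:=\|x\|_\infty\sum_{i\in[n]\setminus I_0^y}e_i$; by hypothesis $\|z\|_B = \|x\|_\infty\cdot\bigl\|\sum_{i\in[n]\setminus I_0^y}e_i\bigr\|_B < 2$, and by $\Class_n$-symmetry we may flip signs so that $\langle z',v\rangle = \sum_{i\in[n]\setminus I_0^y}\|x\|_\infty|v_i| \geq 0$ where $z'$ is the sign-flip; but $v$ vanishes on $[n]\setminus I_0^y$, so this is $0$. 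The genuine argument instead perturbs $x$: consider $x + \frac{\|x\|_\infty}{?}(\text{stuff})$ — the cleanest route is to observe that since $v$ vanishes on $[n]\setminus I_0^y$ and $\langle x,v\rangle = \langle x|_{I_0^y},v\rangle = \|x\|_B\text{-related}$, one tests the point $w$ obtained from $x$ by adding $2\,\mathrm{sign}$-corrected units on $[n]\setminus I_0^y$; then $\langle w,v\rangle>\langle x,v\rangle$, so $w\notin B$, forcing $\|w\|_B>1$, and expanding $\|w\|_B$ via the triangle inequality against $\|x\|_B=1$ and the assumed bound on $\bigl\|\sum e_i\bigr\|_B$ gives the contradiction. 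The main obstacle is pinning down exactly which convex combination or Minkowski-type estimate makes the constant $2/\|x\|_\infty$ appear sharply; I expect it comes from writing $\sum_{i\in[n]\setminus I_0^y} e_i = \frac{1}{2\|x\|_\infty}\bigl((w) - (x \text{ reflected})\bigr)$ for an appropriate $w\in B$, so that $\bigl\|\sum_{i\in[n]\setminus I_0^y}e_i\bigr\|_B \leq \frac{1}{2\|x\|_\infty}(\|w\|_B + 1) \leq \frac{2}{\|x\|_\infty}$ would hold \emph{if} $x$ were illuminable — hence non-illumination reverses it. I would carry out this Minkowski-difference bookkeeping carefully, as that is where all the content sits.
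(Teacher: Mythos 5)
There is a genuine gap, and it starts with a basic logical error. You assert that ``for $i\notin I_0^y$ we have $x_i\neq 0$'' because ``$i\notin I_0^y\Rightarrow i\notin I_0^x$, which is exactly $I_0^y\subset I_0^x$.'' That last equivalence is backwards: ``$i\notin I_0^y\Rightarrow i\notin I_0^x$'' is the same as $I_0^x\subset I_0^y$, not $I_0^y\subset I_0^x$. The hypothesis of the lemma only guarantees $x_i\neq 0\Rightarrow y_i\neq 0$; it leaves open the possibility that $y$'s support is \emph{strictly larger} than $x$'s, i.e.\ that $I_0^x\setminus I_0^y$ is nonempty. This is exactly the interesting case the lemma is designed to handle (and it is what happens in case (a) of Lemma~\ref{l: T1}). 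Your mistaken simplification collapses it, leading you to conclude $\langle y,v\rangle=-\sum_{i\notin I_0^y}|v_i|$ and hence that $x$ is always illuminated in direction $y$. You correctly sense this is ``too strong,'' but you never diagnose the source, and the remainder of the proposal is speculation (``the genuine argument instead perturbs $x$\dots,'' ``I would carry out this Minkowski-difference bookkeeping carefully'') that does not converge to a valid argument.

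The idea you are missing is the explicit test vector. Split $[n]\setminus I_0^y=([n]\setminus I_0^x)\sqcup(I_0^x\setminus I_0^y)$. On $[n]\setminus I_0^x$, Lemma~\ref{l: always positive} gives $y_iv_i=-|v_i|$; on $I_0^x\setminus I_0^y$ the sign of $v_i$ is unconstrained. So $\langle y,v\rangle\geq 0$ translates into $\sum_{i\in I_0^x\setminus I_0^y}y_iv_i\geq\sum_{i\in[n]\setminus I_0^x}|v_i|$. Now take $w:=\sum_{i\in[n]\setminus I_0^x}(-y_i)e_i+\sum_{i\in I_0^x\setminus I_0^y}y_ie_i$, a $\pm1$ vector supported on $[n]\setminus I_0^y$, so $\|w\|_B=\|\sum_{i\notin I_0^y}e_i\|_B$ by $1$-unconditionality. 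Then $\langle w,v\rangle\geq 2\sum_{i\notin I_0^x}|v_i|\geq 2\langle x,v\rangle/\|x\|_\infty$, and testing against the supporting hyperplane $\langle z,v\rangle=\langle x,v\rangle$ forces $\|w\|_B\geq 2/\|x\|_\infty$. Without this construction the constant $2/\|x\|_\infty$ does not appear, and your proposal does not constitute a proof.
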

\begin{proof}
In view of  Lemma~\ref{l: trivial criterion},
the fact that $y=(y_1,y_2,\dots,y_n)$ does not illuminate $x$ means that there is a vector
$v=(v_1,v_2,\dots,v_n)\in \Gauss(B,x)$ such that $\langle y,v\rangle\geq 0$. By the definition of $y$
and by Lemma~\ref{l: always positive}, we have
$$\sum_{i\in [n]\setminus I_0^x}y_i v_i=-\sum_{i\in [n]\setminus I_0^x}|v_i|.$$
Thus, the condition $\langle y,v\rangle\geq 0$ implies that
$$\sum_{i\in I_0^x\setminus I_0^y}y_i v_i\geq \sum_{i\in [n]\setminus I_0^x}|v_i|.$$
Clearly,
$$H:=\bigl\{z\in\R^n:\,\langle z,v\rangle=\langle x,v\rangle\bigr\}$$
is a supporting hyperplane for $B$. On the other hand, we have
$$\Bigl\langle \sum_{i\in [n]\setminus I_0^x}(-y_i) e_i+
\sum_{i\in I_0^x\setminus I_0^y}y_i e_i,v\Bigr\rangle
\geq 2\sum_{i\in [n]\setminus I_0^x}|v_i|\geq \frac{2\langle x,v\rangle}{\|x\|_\infty}.$$
Hence, the $\|\cdot\|_B$-norm of the vector $\sum_{i\in [n]\setminus I_0^x}(-y_i) e_i+
\sum_{i\in I_0^x\setminus I_0^y}y_i e_i$ is at least $2/\|x\|_\infty$.
The result follows.
\end{proof}

\begin{lemma}\label{l: T1}
Let $B\in\Class_n$ ($n\geq 2$) be such that $1\neq \dist(B,[-1,1]^n)<2$. Then at least one of the following is true:
\begin{enumerate}
\item[1)] $B$ can be illuminated in directions
$$T_1:=\bigl\{(\varepsilon_1,\varepsilon_2,\dots,\varepsilon_n)\in\{-1,1\}^n:\,\exists i\leq n-1\mbox{ with }\varepsilon_i=-1\bigr\}
\cup\{e_1+e_2+\dots+e_{n-1}\}.$$
\item[2)] $\|e_i+e_j\|_B>\|e_i\|_B$, $i\neq j$.
\end{enumerate}
\end{lemma}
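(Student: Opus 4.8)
The plan is to show that if alternative 2) fails — i.e.\ if there exist $i\neq j$ with $\|e_i+e_j\|_B\le\|e_i\|_B$, which by $1$-symmetry means $\|e_1+e_2\|_B=\|e_1\|_B$ — then alternative 1) holds, i.e.\ $B$ is illuminated by the set $T_1$. First I would normalize: by scaling we may assume $\|e_1\|_B=1$, so $\|e_1\|_\infty$-type considerations say $[-1,1]^n\subset B$ is false in general, but the standard basis vectors lie on $\partial B$. The failure of 2) together with the $\Class_n$ symmetry forces $e_1+e_2\in\partial B$ and more generally, since $\|e_1+e_2\|_B\le\|e_1\|_B\le\|e_1+e_2\|_B$ (the last inequality because $e_1=\frac12((e_1+e_2)+(e_1-e_2))$ and both summands have norm $\le\|e_1+e_2\|_B$ by symmetry), we get $\|e_1+e_2\|_B=1$; iterating, $\|e_1+\dots+e_k\|_B$ is controlled. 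Actually the key structural consequence I expect is that the ``corner'' behavior of $B$ near the vertex $(1,1,\dots,1)$ is cube-like enough that the $2^n-1$ sign-directions (those with at least one $-1$ among the first $n-1$ coordinates) illuminate every boundary point except possibly those lying ``above'' the all-ones ray, which are then handled by the single extra direction $e_1+\dots+e_{n-1}$.

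The main technical step is to invoke Lemma~\ref{l: norm implication}. Take any $x\in\partial B$ and consider the candidate direction $y\in\{-1,0,1\}^n$ with $y_i=-\sign(x_i)$ on $\supp(x)$ and $y_i$ chosen in $\{-1,1\}$ on $I_0^x$ so that $y$ has a $-1$ in some coordinate $\le n-1$ — this is possible unless $x$ has all coordinates nonzero and of the same sign pattern blocking us, which is the exceptional case. If this $y$ fails to illuminate $x$, Lemma~\ref{l: norm implication} gives $\bigl\|\sum_{i\in[n]\setminus I_0^y}e_i\bigr\|_B\ge 2/\|x\|_\infty$. Since $I_0^y=\emptyset$ here (we filled in all coordinates), the left side is $\|e_1+\dots+e_n\|_B=\dist(B,[-1,1]^n)\cdot\|e_1\|_B=\dist(B,[-1,1]^n)<2$, while $\|x\|_\infty\le 1$ forces the right side $\ge 2$ — a contradiction. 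So every such $x$ is illuminated by a sign-direction in $T_1$. The remaining boundary points are those where we are forced (by the constraint $I_0^y\subset I_0^x$ and $y_i=-\sign x_i$) to use $y=(-1,-1,\dots,-1)$ or $y=(1,1,\dots,1)$, i.e.\ $x$ proportional to the all-ones vector up to signs; by symmetry assume $x=t(1,1,\dots,1)\in\partial B$, so $t=1/\dist(B,[-1,1]^n)$ and $\|x\|_\infty=t$. For such $x$ the direction $-(1,\dots,1)$ is not in $T_1$, so I instead check $y=e_1+\dots+e_{n-1}-0\cdot$? — no: I use the listed extra direction $e_1+\dots+e_{n-1}$, or rather its negative role: one checks via Lemma~\ref{l: trivial criterion} and Lemma~\ref{l: ordering} that for $v\in\Gauss(B,x)$ all $v_i$ are equal (since all $|x_i|$ are equal, Lemma~\ref{l: ordering} gives $|v_i|=|v_j|$, and Lemma~\ref{l: always positive} fixes the signs), say $v=(1,\dots,1)/\sqrt n$; then $\langle e_1+\dots+e_{n-1},v\rangle=(n-1)/\sqrt n>0$, so that direction does \emph{not} illuminate $x$ — meaning I must use a sign-vector with a $-1$ in the last coordinate only among those with a $+1$ in position...

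Let me re-orient: the cleaner bookkeeping is that $T_1$ contains \emph{all} of $\{-1,1\}^n$ except the single vector $(1,1,\dots,1)$ (those are the vectors with no $-1$ in the first $n-1$ coordinates, i.e.\ only $(1,\dots,1,\pm1)$, and $T_1$ excludes $(1,\dots,1)$ but \emph{includes} $(1,\dots,1,-1)$). So the only sign-direction missing is $\mathbf 1=(1,\dots,1)$, and its job — illuminating boundary points $x$ with all $x_i<0$ — could instead be partially covered; the genuinely uncovered points are those $x\in\partial B$ illuminated \emph{only} by $\mathbf 1$, which by Lemma~\ref{l: always positive}/\ref{l: trivial criterion} are points with all $x_i<0$ such that every $v\in\Gauss(B,x)$ has all $v_i<0$; by $\Class_n$-symmetry flip to all $x_i>0$. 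I then claim the extra direction $e_1+\dots+e_{n-1}$ (note: only $n-1$ ones, zero in the last slot) illuminates every such point when $\dist(B,[-1,1]^n)<2$: apply Lemma~\ref{l: norm implication} to $x$ and $y=-(e_1+\dots+e_{n-1})+0\cdot e_n$ — wait, that has $I_0^y=\{n\}\not\subset I_0^x=\emptyset$, so Lemma~\ref{l: norm implication} does not apply directly. Instead I argue directly with Lemma~\ref{l: trivial criterion}: for $v\in\Gauss(B,x)$ with all $v_i>0$, I need $\langle e_1+\dots+e_{n-1},v\rangle<0$, which is false.

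So the correct final direction must be $-(e_1+\dots+e_{n-1})$ is also not helpful; rather, the statement lists $\{e_1+\dots+e_{n-1}\}$ and I think its true role is to illuminate points $x$ with $x_n=0$. Let me therefore restructure the plan: \textbf{(Step A)} For $x\in\partial B$ with $I_0^x\neq\emptyset$, pick $i_0\in I_0^x$ and pick $y$ with $y_i=-\sign(x_i)$ off $I_0^x$, $y_{i_0}=-1$, other coordinates of $y$ in $I_0^x$ set to $+1$; arrange $i_0\le n-1$ if possible (possible unless $I_0^x=\{n\}$ exactly and ... actually if $n\in I_0^x$ but $|I_0^x|\ge2$ or $I_0^x=\{k\}$ with $k<n$ we win). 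If this $y\in T_1$ fails to illuminate, Lemma~\ref{l: norm implication} gives $\|\sum_{[n]\setminus I_0^y}e_i\|_B\ge 2/\|x\|_\infty\ge2$; but $I_0^y=\emptyset$ gives $\|\mathbf 1\|_B<2$, contradiction; if instead $I_0^x=\{n\}$ forces $y=(-1,\dots,-1,1)$ or we'd rather use $I_0^y=\{n\}$ giving $\|e_1+\dots+e_{n-1}\|_B\ge 2/\|x\|_\infty$ — and here the extra direction $e_1+\dots+e_{n-1}$ enters, since if \emph{both} the sign-direction and this fail we'd need $\|e_1+\dots+e_{n-1}\|_B\ge2>\|\mathbf1\|_B\ge\|e_1+\dots+e_{n-1}\|_B$, contradiction. \textbf{(Step B)} For $x\in\partial B$ with all $x_i\neq0$, by $\Class_n$-symmetry assume all $x_i>0$; then $-\mathbf1$ illuminates $x$ iff all $v\in\Gauss(B,x)$ have a positive coordinate, which holds unless all $v_i\le0$, impossible since $\|v\|=1$ and $v_ix_i\ge0$ forces $v_i\ge0$, so in fact some $v_i>0$ and $-\mathbf1\in T_1$ (as $n\ge2$, $-\mathbf1$ has a $-1$ in position $\le n-1$) illuminates $x$. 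This closes all cases.

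The step I expect to be the main obstacle is the careful case split in Step A ensuring the index $i_0$ with the $-1$ can be chosen $\le n-1$, and correctly routing the single degenerate configuration $I_0^x=\{n\}$ (and its symmetric images) through the auxiliary direction $e_1+\dots+e_{n-1}$ via Lemma~\ref{l: norm implication} with $I_0^y=\{n\}$; everything else is a direct application of Lemmas~\ref{l: trivial criterion}, \ref{l: always positive} and~\ref{l: norm implication} together with the hypothesis $\dist(B,[-1,1]^n)<2$, i.e.\ $\|e_1+\dots+e_n\|_B<2\|e_1\|_B$.
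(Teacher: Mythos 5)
The proposal has a genuine gap in Step B. You write: ``For $x\in\partial B$ with all $x_i\neq 0$, by $\Class_n$-symmetry assume all $x_i>0$.'' This reduction is not valid here, because while the body $B$ is invariant under sign flips, the illuminating set $T_1$ is \emph{not}: $T_1$ contains every $\pm 1$-vector \emph{except} $(1,\dots,1,1)$ and $(1,\dots,1,-1)$. If $x$ has $x_i<0$ for every $i\leq n-1$ (and $x_n\neq 0$ of either sign), the ``natural'' illuminating direction $y=(-\sign(x_1),\dots,-\sign(x_n))$ is one of the two excluded vectors, so your argument does not produce a direction in $T_1$. That case is precisely where the content of the lemma lives: it is the situation in which a boundary point may genuinely fail to be illuminated by $T_1$, and where alternative 2) must be derived. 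A confirming symptom: your Steps A and B, if correct, would establish alternative 1) \emph{unconditionally}, never invoking the assumed failure of alternative 2) --- but 1) is not unconditionally true under the lemma's hypotheses, so something must be missing.

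To close this, one has to treat boundary points $x$ with $I_0^x=\emptyset$ and $x_i<0$ for all $i\leq n-1$ by a separate argument that does not merely flip signs. The paper does this by splitting on which coordinate $|x_j|$ is minimal. When $j=n$ it uses the auxiliary direction $e_1+\dots+e_{n-1}$ and forces $v=\pm e_n$, then derives a contradiction with $B\subsetneq[-1,1]^n$. When $j\leq n-1$ it deliberately sets $y_j:=-1$ (the ``wrong'' sign relative to $-\sign(x_j)$) so that $y\in T_1$, and the resulting inequality $\sum_i |v_i|\leq 2|v_j|$ combined with Lemma~\ref{l: ordering} forces the supporting normal to be concentrated on two coordinates $k,j$, giving $\|e_k+e_j\|_B>\|e_k\|_B$, i.e.\ alternative 2). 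Your Step A is essentially the paper's case of $I_0^x\neq\emptyset$ and is fine; the missing piece is this last, sign-asymmetric case, which cannot be handled by the symmetry reduction you propose.
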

\begin{proof}
Without loss of generality, we can assume that $\|e_i\|_B=1$ (note that this implies $B\subset [-1,1]^n$,
i.e.\ $\|\cdot\|_B\geq \|\cdot\|_\infty$).
Assume that the first condition is not satisfied.
Thus, there is a vector $x\in\partial B$ which is not illuminated in directions from $T_1$.
Consider three possibilities:
\begin{enumerate}
\item[a)] $I^x_0\neq \emptyset$.
Then we can find a vector $y\in T_1$ such that $I^y_0\subset I^x_0$ and $y_i=-\sign(x_i)$ for all $i\leq n$ with $x_i\neq 0$.
By Lemma~\ref{l: norm implication}, we have
$$\Bigl\|\sum_{i=1}^n e_i\Bigr\|_B\geq \frac{2}{\|x\|_\infty}\geq 2,$$
contradicting the assumption $\dist(B,[-1,1]^n)<2$.

\item[b)] $I^x_0= \emptyset$ and $|x_n|\leq |x_i|$ for all $i\leq n$.
We define $y=(y_1,y_2,\dots,y_n)$ by
$y_i:=-\sign(x_i)$ ($i\leq n-1$); $y_n:=0$ if $y_i=1$ for all $i\leq n-1$, or $y_n:=-\sign(x_n)$, otherwise.
It is not difficult to see that $y\in T_1$. Hence, direction $y$ does not illuminate $x$,
and, by Lemma~\ref{l: trivial criterion}, there is $v=(v_1,v_2,\dots,v_n)\in \Gauss(B,x)$ such that $\langle y,v\rangle\geq 0$.
In view of Lemma~\ref{l: always positive} and the definition of $y$,
this implies that $v_i=0$ for all $i\leq n-1$ (and $v_n=\pm 1$), whence
$$H:=\bigl\{z\in\R^n:\,\langle z,e_n\rangle=|x_n|\bigr\}$$
is a supporting hyperplane of $B$. On the other hand, $e_n\in B$ by our assumption, implying $|x_n|\geq 1$.
Thus, $|x_1|,|x_2|,\dots,|x_n|\geq 1$ and $x\in\partial B$.
But this contradicts the condition $B\subset [-1,1]^n$, $B\neq [-1,1]^n$.

\item[c)] $I^x_0= \emptyset$ and there is $j\leq n-1$ such that
$|x_j|\leq |x_i|$ for all $i\leq n$
(clearly, $j$ does not have to be unique). Define a vector $y=(y_1,y_2,\dots,y_n)$ by
$y_i:=-\sign(x_i)$ ($i\neq j$); $y_j:=-1$.
Again, $y\in T_1$. Hence,
there is $v=(v_1,v_2,\dots,v_n)\in \Gauss(B,x)$ such that $\langle y,v\rangle\geq 0$.
This implies, in view of Lemma~\ref{l: always positive},
\begin{equation}\label{eq: aux 165}
0\neq \sum_{i=1}^n|v_i|\leq 2|v_j|.
\end{equation}
On the other hand, in view of Lemma~\ref{l: ordering} we have $|v_j|\leq |v_i|$
for all $i\leq n$ such that $|x_i|>|x_j|$.
The last two conditions can be simultaneously fulfilled only if the set
$$J:=\bigl\{i\leq n:\,|x_i|>|x_j|\bigr\}$$
has cardinality at most $1$. The case $J=\emptyset$ (when all coordinates of $x$ are equal by absolute value)
was covered in part (b). Thus, we only need to consider the situation $|J|=1$.
Assume that $k\leq n$ is such that $|x_k|>|x_j|$. Then, by \eqref{eq: aux 165} and Lemma~\ref{l: ordering}, we have
$|v_k|= |v_j|$ and $v_i=0$ for all $i\neq k,j$.
Hence,
$$H:=\bigl\{z=(z_1,z_2,\dots,z_n)\in\R^n:\,z_k+z_j=|x_k|+|x_j|\bigr\}$$
is a supporting hyperplane for $B$. At the same time, $1=\|x\|_B\geq\|x_k e_k\|_B=|x_k|>|x_j|$, whence $|x_k|+|x_j|<2$.
This implies that $e_k+e_j\notin B$, i.e.\ $\|e_k+e_j\|_B>1$.
\end{enumerate}
\end{proof}

\begin{lemma}\label{l: T2}
Let $B\in\Class_n$ ($n\geq 2$) and assume that $\|e_i+e_j\|_B>\|e_i\|_B$, $i\neq j$.
Then $B$ can be illuminated in directions
$$T_2:=\bigl(\{-1,1\}^{n-1}\times \{0\}\bigr)\cup \{\pm e_n\}.$$
\end{lemma}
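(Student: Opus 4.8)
The plan is to normalize, reduce by the symmetries of $B$ that fix $T_2$, and split $\partial B$ into a handful of regimes. Throughout I will use that $\dist(B,[-1,1]^n)<2$; although it is not among the displayed hypotheses, it holds in the setting of Proposition~\ref{p: det} where this lemma is applied, and — as the cross-polytope shows, see the last paragraph — the conclusion genuinely needs it.

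Rescale so that $\|e_i\|_B=1$; then $\pm e_i\in B$, $B\subset[-1,1]^n$, the hypothesis $\|e_i+e_j\|_B>1$ says that no point of $B$ has two coordinates of absolute value $1$, and $\|\sum_{i\in T}e_i\|_B\le\dist(B,[-1,1]^n)<2$ for every $T\subset[n]$. Fix $x\in\partial B$. Since coordinate sign changes and permutations of the first $n-1$ coordinates map $T_2$ onto itself, I may assume $x_1\ge x_2\ge\dots\ge x_{n-1}\ge0$ and $x_n\ge0$; note $\|x\|_\infty\le\|x\|_B=1$. Combining this with $\|\sum_{i\in T}e_i\|_B<2$, Lemma~\ref{l: norm implication} yields the key fact I will lean on: any $y\in\{-1,0,1\}^n$ with $I_0^y\subset I_0^x$ and $y_i=-\sign(x_i)$ for $i\in[n]\setminus I_0^x$ illuminates $x$, since otherwise Lemma~\ref{l: norm implication} would force $\bigl\|\sum_{i\in[n]\setminus I_0^y}e_i\bigr\|_B\ge2/\|x\|_\infty\ge2$, contradicting $\|\sum_{i\in T}e_i\|_B<2$.

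Now the cases. If $x_n=0$ (so $x\ne0$), take $y$ with $y_i=-\sign(x_i)$ for $i\le n-1$ with $x_i\ne0$, $y_i=1$ for $i\le n-1$ with $x_i=0$, and $y_n=0$; then $y\in\{-1,1\}^{n-1}\times\{0\}\subset T_2$ and $I_0^y=\{n\}\subset I_0^x$, so the key fact gives that $y$ illuminates $x$. If $x_n>x_i$ for all $i\le n-1$ (coordinate $n$ is the strict maximum, which includes $x_n=1$), then Lemmas~\ref{l: always positive} and~\ref{l: ordering} give $v_n\ge|v_i|\ge0$ for every $v\in\Gauss(B,x)$, and $v_n=0$ would give $v=0$; hence $v_n>0$ for all such $v$, and $-e_n\in T_2$ illuminates $x$ by Lemma~\ref{l: trivial criterion}. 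In the remaining case $x_1\ge x_n>0$: if all coordinates of $x$ are nonzero I use $y=-(e_1+\dots+e_{n-1})\in T_2$ — here $v_i\ge0$ for every $i$, so $\langle y,v\rangle=-\sum_{i<n}v_i\le0$ with equality only when $v=e_n$, and $e_n\in\Gauss(B,x)$ would force $x_n=1$ and hence $x_1=1$, contradicting the hypothesis; thus $y$ illuminates $x$.

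The delicate case is $x_1\ge x_n>0$ with $x$ having a vanishing coordinate; then $I_0^x\subset[n-1]$, and I write $\supp(x)\cap[n-1]=\{1,\dots,p\}$, $I_0^x=\{p+1,\dots,n-1\}$. Here Lemma~\ref{l: norm implication} does not apply to any $T_2$-direction, because the value $y_n=-\sign(x_n)\ne0$ it would demand is forbidden in the first part of $T_2$. The extra ingredient I expect to need is a quantitative form of $\dist(B,[-1,1]^n)=:D<2$: testing the supporting hyperplane of $B$ at $x$ against the point $\tfrac1D\bigl(\mathbf 1_{\supp(x)}+\sum_{i\in I_0^x}\sign(v_i)e_i\bigr)\in B$ gives, for every $v\in\Gauss(B,x)$,
$$\sum_{i\in I_0^x}|v_i|\ \le\ \bigl(D\|x\|_\infty-1\bigr)\sum_{i\in\supp(x)}v_i\ <\ \sum_{i\in\supp(x)}v_i ,$$
so the normal cone at $x$ is ``thin'' in the directions of the vanishing coordinates. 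Feeding this estimate, Lemma~\ref{l: ordering} (which bounds $|v_i|$ for $i\in I_0^x$ by $\min(v_n,v_1,\dots,v_p)$), and $\langle v,x\rangle\ge\langle v,e_n\rangle=v_n$ into the scalars $\langle-e_n,v\rangle=-v_n$ and $\langle-(e_1+\dots+e_{n-1}),v\rangle$, I would try to establish the dichotomy: either every $v\in\Gauss(B,x)$ has $v_n>0$, and then $-e_n$ illuminates $x$; or $\Gauss(B,x)$ contains a vector with $v_n=0$, and then the thinness estimate forces $-(e_1+\dots+e_{n-1})$ to have strictly negative inner product with all of $\Gauss(B,x)$. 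Making this dichotomy airtight — ruling out a boundary point whose Gauss image carries both a ``vertical'' normal ($v_n=0$), which defeats $-e_n$, and a ``tilted'' normal ($v_n>0$), which could defeat the diagonal direction — is the main obstacle, and it is precisely here that $D<2$ (and not merely $D\le2$) is essential: for the cross-polytope $\{\|x\|_1\le1\}$ one has $D=n$ and (for $n\ge 3$) the vertex $e_1$ is illuminated by no direction of $T_2$.
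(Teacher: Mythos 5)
Your reduction and your first three cases are sound: the $x_n=0$ case via Lemma~\ref{l: norm implication} (this is exactly where you import $\dist(B,[-1,1]^n)<2$), the case where $|x_n|$ is the strict maximum via Lemmas~\ref{l: always positive} and~\ref{l: ordering}, and the full-support case via $-(e_1+\dots+e_{n-1})$ together with $\|e_i+e_j\|_B>\|e_i\|_B$. Your side remark is also correct: with only the displayed hypothesis the conclusion fails already for the cross-polytope when $n\ge 3$, and the paper's own proof of its case (b) is valid only when $x_i\neq 0$ for all $i\le n-1$ (otherwise its vector $y$ is not in $T_2$, respectively the identity $\langle y,v\rangle=-\sum_{i\le n-1}|v_i|$ breaks down), so your ``delicate case'' is precisely the case the paper glosses over. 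However, you leave that case open, so as it stands the proposal is incomplete --- and, more seriously, the dichotomy you propose to establish there cannot be established, even assuming $\dist(B,[-1,1]^n)<2$.

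Indeed, take $n=4$ and
$$B:=\bigl\{z\in\R^4:\ \|z\|_\infty\le 1,\ \ z^*_1+z^*_2+0.6\,(z^*_3+z^*_4)\le 1.7\bigr\},$$
where $z^*_1\ge z^*_2\ge z^*_3\ge z^*_4$ is the non-increasing rearrangement of $(|z_1|,\dots,|z_4|)$; the second constraint is $\max\langle w,z\rangle\le 1.7$ over all signed permutations $w$ of $(1,1,0.6,0.6)$, so $B\in\Class_4$. Then $\|e_i\|_B=1$, $\|e_i+e_j\|_B=2/1.7>\|e_i\|_B$, and $\dist(B,[-1,1]^4)=3.2/1.7<2$. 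The boundary point $x=(1,0,0,0.7)$ is exactly of your fourth type, and no direction of $T_2$ illuminates it: for $\pm e_4$ and for $(+1,\varepsilon_2,\varepsilon_3,0)$ the first coordinate of $x+ty$ stays $\ge 1$, while for $y=(-1,\varepsilon_2,\varepsilon_3,0)$ one has $\langle(1,0.6\varepsilon_2,0.6\varepsilon_3,1),x+ty\rangle=1.7+0.2t>1.7$ for every $t>0$, so $x+ty\notin B$. In Gauss-image language, $\Gauss(B,x)$ contains both $e_1$ (which defeats $\pm e_4$) and the normalized vectors $(1,\pm 0.6,\pm 0.6,1)$ (the one with matching signs defeats each $(-1,\varepsilon_2,\varepsilon_3,0)$) --- precisely the ``vertical plus tilted'' configuration you were hoping to exclude. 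So your case 4 is a genuine and unfillable gap: Lemma~\ref{l: T2} is false as stated (the cross-polytope shows this even without any distance restriction), and it remains false for $n\ge 4$ after adding $\dist(B,[-1,1]^n)<2$; any repair must change the statement (e.g.\ extract more information at the point where $T_1$ fails in Lemma~\ref{l: T1}, or enlarge the direction set), not just sharpen your dichotomy. Note that this also breaks the paper's derivation of Proposition~\ref{p: det} through Lemma~\ref{l: T2}, although the body above is illuminated by $T_1$, so the proposition itself is not contradicted by this example.
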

\begin{proof}
We will assume that $\|e_i\|_B=1$.
Let $x=(x_1,x_2,\dots,x_n)\in\partial B$. Consider two cases:
\begin{enumerate}
\item[a)] $|x_n|>|x_i|$ for all $i\leq n-1$. In view of Lemmas~\ref{l: always positive} and~\ref{l: ordering},
for any $v=(v_1,v_2,\dots,v_n)\in \Gauss(B,x)$ we have $v_n \neq 0$ and $\sign(v_n)=\sign(x_n)$. Hence
$x$ is illuminated by the vector $-\sign(x_n)e_n\in T_2$.

\item[b)] There is $j\leq n-1$ such that $|x_j|\geq |x_i|$ for all $i\leq n$.
Define $y=(y_1,y_2,\dots,y_n)$ as $y_i:=-\sign(x_i)$ for all $i\leq n-1$, and $y_n:=0$.
Obviously, $y\in T_2$.
If $y$ illuminates $x$ then we are done. Otherwise, by Lemmas~\ref{l: trivial criterion}
and~\ref{l: always positive}, for some $v=(v_1,v_2,\dots,v_n)\in \Gauss(B,x)$ we have
$$0\leq \langle y,v\rangle=-\sum\limits_{i=1}^{n-1}|v_i|.$$
Hence, $v=\pm e_n$, and the hyperplane
$$H:=\bigl\{z\in\R^n:\,\langle z,e_n\rangle=|x_n|\bigr\}$$
is supporting for $B$, whence $\|x_n e_n\|_B= 1$.
On the other hand, in view of the assumptions of the lemma, $\|x\|_B\geq \|x_je_j+x_ne_n\|_B>\|x_n e_n\|_B$.
We get that $\|x\|_B>1$, contradicting the choice of $x$.
\end{enumerate}
\end{proof}

\section{Randomized illumination of convex bodies far from the cube}\label{s: rand}

Assume that $n\geq 2$. Let $X$ be an $n$-dimensional random vector with i.i.d.\ coordinates taking values $+1$
and $-1$ with equal probability $1/2$. Further, let $\{X^\ell\}_{\ell=1}^{\infty}$ be copies of $X$.
Next, for any $m\leq n$ let $\Proj^{(m)}$ be the random coordinate projection in $\R^n$ of rank $m$,
such that the image of $\Proj^{(m)}$ is uniformly distributed on the set of all coordinate subspaces of dimension $m$.
In other words, for any sequence $i_1<i_2<\dots<i_m\leq n$ we have
$\Im\Proj^{(m)}=\spn\{e_{i_1},e_{i_2},\dots,e_{i_m}\}$ with probability ${n\choose m}^{-1}$.
Let also $\Proj^{(m)}_\ell$ ($\ell=1,2,\dots$) be copies of $\Proj^{(m)}$.
Additionally, we require that all the $X^\ell$ and $\Proj^{(m)}_\ell$ ($\ell=1,2,\dots$; $m\leq n$)
be jointly independent.
Now, for every $k\leq \lceil n/2\rceil$ we define a random (multi)set of vectors
\begin{equation}\label{eq: rset def}
\RandSet_k:=\bigl\{\Proj^{(2k-1)}_\ell (X^\ell)\bigr\}_{\ell=1}^{\lfloor 2^n/n^2\rfloor}.
\end{equation}
The cardinality $\lfloor 2^n/n^2\rfloor$ has no special meaning; we only need the condition
$$\Bigl|\bigcup_{k=1}^{\lceil n/2\rceil} \RandSet_{k}\Bigr|<2^{n-1},$$
together with the requirement that the individual sets $\RandSet_k$ are ``sufficiently large''.

\begin{lemma}\label{l: prob core}
There is a universal constant $C>0$ such that, given $n\geq C$ and any natural $k\leq \lceil n/2\rceil$,
the event
\begin{align*}
\Event_k:=\bigl\{&\mbox{For any }y=(y_1,y_2,\dots,y_n)\in\{-1,0,1\}^n\mbox{ with }|I^y_0|=n-k\mbox{ there is }\ell\leq 2^n/n^2\\
&\mbox{such that }\Proj^{(2k-1)}_\ell(y)=y\mbox{ and }X^\ell_i=y_i\mbox{ for all }i\in [n]\setminus I^y_0\bigr\}
\end{align*}
has probability at least $1-\exp(-2n)$. 
\end{lemma}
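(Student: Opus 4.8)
The plan is to fix $k\le \lceil n/2\rceil$ and estimate the probability that the event $\Event_k$ fails by a union bound over all relevant vectors $y\in\{-1,0,1\}^n$. There are $\binom{n}{n-k}2^k = \binom{n}{k}2^k$ vectors $y$ with $|I^y_0|=n-k$. For a single such $y$, and a single index $\ell$, the "success" requires two things simultaneously: that the random projection $\Proj^{(2k-1)}_\ell$ has image containing $\spn\{e_i : i\notin I^y_0\}$ (so that $\Proj^{(2k-1)}_\ell(y)=y$), and that the sign vector $X^\ell$ matches $y$ on the $k$ coordinates outside $I^y_0$. These two events are independent. The first has probability $\binom{n-k}{2k-1-k}\big/\binom{n}{2k-1} = \binom{n-k}{k-1}\big/\binom{n}{2k-1}$, and the second has probability $2^{-k}$. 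So the probability $p_k$ that a fixed index $\ell$ works for a fixed $y$ is
\[
p_k = 2^{-k}\,\frac{\binom{n-k}{k-1}}{\binom{n}{2k-1}}.
\]
Since the $\ell$'s are independent, the probability that none of the $N:=\lfloor 2^n/n^2\rfloor$ indices works for $y$ is $(1-p_k)^N \le \exp(-p_k N)$, and hence
\[
\Prob(\Event_k^c) \le \binom{n}{k}2^k \exp(-p_k N).
\]
The task reduces to showing $\binom{n}{k}2^k \exp(-p_k N)\le \exp(-2n)$ for all $1\le k\le\lceil n/2\rceil$ and all large $n$.

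The main obstacle is the lower bound on $p_k$: we need to show that $p_k N$ is large enough — concretely, that $p_k N \ge 2n + k\log 2 + \log\binom{n}{k} + 2n$ or so — uniformly in $k$. Equivalently, we need a lower bound of the form $p_k \ge 2^{-n} \cdot (\text{something}\ \ge n^2\cdot \mathrm{poly})$. The quantity $p_k = 2^{-k}\binom{n-k}{k-1}\big/\binom{n}{2k-1}$ must be estimated by a careful but routine manipulation of binomial coefficients. Writing the ratio of binomials as a product of $k-1$ factors,
\[
\frac{\binom{n-k}{k-1}}{\binom{n}{2k-1}} = \frac{(n-k)!\,(n-2k+1)!}{(n-2k+2)!\cdots}\ ,
\]
one can show $\binom{n-k}{k-1}\big/\binom{n}{2k-1} \ge c^k$ for an absolute constant $c$, and more precisely that $p_k \ge (c')^k$ for some $c'\in(0,1)$; the worst case is $k\approx n/2$, where one checks directly (using $\binom{n}{2k-1}\le 2^n$ and a Stirling-type lower bound on $\binom{n-k}{k-1}$) that $p_k \ge 2^{-n} n^{O(1)}$, which is exactly what makes $p_k N \ge n^{\Omega(1)}$. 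For small $k$ (say $k=O(1)$) the bound is much easier since $p_k$ is bounded below by an absolute constant.

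Having established $p_k \ge 2^{-n}\cdot n^{c}$ for a suitable exponent $c>0$ (the precise value coming out of the binomial estimate), we get $p_k N \ge 2^{-n}n^c\cdot \lfloor 2^n/n^2\rfloor \ge \tfrac12 n^{c-2}$, which for $c$ large enough — or after being slightly more careful with the binomial ratio to extract a genuinely exponential-in-$k$ or polynomial-in-$n$ gain — dominates $2n + k\log 2 + \log\binom{n}{k}\le 2n + n\log 2 + n\log 2 = O(n)$. Thus for $n$ exceeding an absolute constant $C$ we obtain $\Prob(\Event_k^c)\le\exp(-2n)$, as desired. I expect the delicate point to be confirming that the exponential loss $2^{-k}$ from the sign-matching step is not too costly when $k$ is proportional to $n$: this is where one must verify that $\binom{n-k}{k-1}\big/\binom{n}{2k-1}$ does not decay faster than $2^{-k}$ times a subexponential factor, equivalently that $p_k$ stays above $2^{-n}$ by a polynomial margin across the whole range of $k$.
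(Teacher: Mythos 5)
Your setup is the same as the paper's: a union bound over the $\binom{n}{k}2^k$ vectors $y$, independence of $\Proj^{(2k-1)}_\ell$ and $X^\ell$ giving
\[
p_k = 2^{-k}\,\binom{n-k}{k-1}\Big/\binom{n}{2k-1},
\]
and then $(1-p_k)^N\le\exp(-p_k N)$ with $N=\lfloor 2^n/n^2\rfloor$. That reduction is correct, and it is exactly the reduction the paper makes.

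What is missing is the actual content of the lemma: the lower bound on $p_k$, uniformly over $1\le k\le\lceil n/2\rceil$. You state, but do not prove, that $p_k\ge 2^{-n}\cdot n^{O(1)}$, and you explicitly flag this as the ``delicate point'' you expect to hold. This is where essentially all the work lies, and it is the step the paper does carefully. Via Stirling, the paper obtains
\[
p_k \;\ge\; \frac{1}{2n^2}\,f(k/n)^n, \qquad f(t) := 2^t\,(1-t)^{1-t}\,t^t,
\]
and then checks that $f$ attains its minimum on $[0,1]$ at $t=1/3$ with $f(1/3)=2/3$, yielding $p_k\ge\frac{1}{2n^2}(2/3)^n$. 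This is an \emph{exponential} margin over $2^{-n}$ (namely a factor $(4/3)^n/(2n^2)$), not merely the polynomial margin $n^{O(1)}$ you conjecture; consequently $p_k N\gtrsim (4/3)^n/n^4$ is exponentially large, which makes the conclusion immediate. A related slip: you guess the worst case is $k\approx n/2$, but at $k=n/2$ one has $p_k\approx 2^{-n/2-1}$, which is huge; the true minimizer of $p_k$ (in the exponential scale) is $k\approx n/3$, a fact that only becomes visible after the Stirling computation. So the structure of your argument is right and would close if you carried out the estimate, but as written the key inequality is asserted rather than proved, the claimed form of the bound undersells what is actually true, and the identification of the extremal $k$ is incorrect.
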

\begin{proof}
We shall assume that $n$ is large. Fix any natural $k\leq \lceil n/2\rceil$. Clearly, there are precisely
${n\choose k}2^k$ vectors in $\{-1,0,1\}^n$ whose supports have cardinality $k$. Hence, it is sufficient
to show that for any fixed $y\in\{-1,0,1\}^n$ with $|I^y_0|=n-k$, the probability of the event
$$\Event_y:=\bigl\{\mbox{There is }\ell\leq 2^n/n^2
\mbox{ such that }\Proj^{(2k-1)}_\ell(y)=y\mbox{ and }X^\ell_i=y_i\mbox{ for all }i\in [n]\setminus I^y_0\bigr\}$$
is at least $1-2^{-k}\exp(-2n){n\choose k}^{-1}$.

Take any $\ell\leq 2^n/n^2$. Obviously,
$$\Prob\bigl\{X^\ell_i=y_i\mbox{ for all }i\in [n]\setminus I^y_0\bigr\}=2^{-k}.$$
Next, in view of the definition of the projection $\Proj^{(2k-1)}_\ell$, we have
$$\Prob\bigl\{\Proj^{(2k-1)}_\ell(y)=y\bigr\}={n-k\choose k-1}{n\choose 2k-1}^{-1}.$$
Using Stirling's approximation, the last expression can be estimated as follows:
\begin{align*}
{n-k\choose k-1}{n\choose 2k-1}^{-1}&=\frac{(n-k)!(2k-1)!}{(k-1)!n!}\\
&\geq \frac{1}{n}\frac{(n-k)!(2k)!}{k!n!}\\
&\geq \frac{1}{2n}\frac{(n-k)^{n-k+1/2}(2k)^{2k+1/2}}{k^{k+1/2}n^{n+1/2}}\\
&\geq \frac{4^k}{2n^2}\Bigl(1-\frac{k}{n}\Bigr)^{n-k}\Bigl(\frac{k}{n}\Bigr)^{k}.
\end{align*}
Now, since $\Proj^{(2k-1)}_\ell$ and $X^\ell$ are independent, we get
$$\Prob\bigl\{\Proj^{(2k-1)}_\ell(y)=y\mbox{ and }X^\ell_i=y_i\mbox{ for all }i\in [n]\setminus I^y_0\bigr\}
\geq \frac{2^k}{2n^2}\Bigl(1-\frac{k}{n}\Bigr)^{n-k}\Bigl(\frac{k}{n}\Bigr)^{k}.$$
It is not difficult to check that the function $f(t):=2^t (1-t)^{1-t} t^t$, defined for $t\in[0,1]$,
takes its minimum at $t=1/3$. Hence,
$$\frac{2^k}{2n^2}\Bigl(1-\frac{k}{n}\Bigr)^{n-k}\Bigl(\frac{k}{n}\Bigr)^{k}
=\frac{1}{2n^2}f(k/n)^n\geq \frac{1}{2n^2}f(1/3)^n=\frac{1}{2n^2}\Bigl(\frac{2}{3}\Bigr)^n.$$

Finally, we get
\begin{align*}
1-\Prob(\Event_y)
&=\prod_{\ell=1}^{\lfloor 2^n/n^2\rfloor}
\Prob\bigl\{\Proj^{(2k-1)}_\ell(y)\neq y\mbox{ or }X^\ell_i\neq y_i\mbox{ for some }i\in [n]\setminus I^y_0\bigr\}\\
&\leq \Bigl(1-\frac{1}{2n^2}\Bigl(\frac{2}{3}\Bigr)^n\Bigr)^{\lfloor 2^n/n^2\rfloor}\\
&\ll 2^{-k}\exp(-2n){n\choose k}^{-1},
\end{align*}
provided that $n$ is sufficiently large. The result follows.
\end{proof}

Now, we can prove the following result which, together with Proposition~\ref{p: det},
gives the estimate $\Ill(B)<2^{n}$ for any $B\in\Class_n$ with $\dist(B,[-1,1]^n)\neq 1$.
\begin{prop}
There is a universal constant $C>0$ with the following property:
let $n\geq C$, $B\in\Class_n$, and assume that $\dist(B,[-1,1]^n)\geq 2$.
Define
$$T:=\{-1,1\}^{n-1}\times \{0\}.$$
Then with probability at least $1-\exp(-n)$ the set $B$ can be illuminated in directions
$$T\cup\bigcup_{k=1}^{\lceil n/2\rceil}\RandSet_{k},$$
where the random sets $\RandSet_{k}$ are defined by \eqref{eq: rset def}.
\end{prop}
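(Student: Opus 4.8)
The plan is to condition on the event that $\Event_1,\dots,\Event_{\lceil n/2\rceil}$ all occur, and then, on that event, to illuminate each boundary point of $B$ separately, distinguishing two regimes according to how many coordinates of the point attain its $\|\cdot\|_\infty$-norm. First I would normalize so that $\|e_i\|_B=1$ (this affects neither $\Ill(B)$, nor $\dist(B,[-1,1]^n)$, nor the direction sets $T$ and $\RandSet_k$); then $B\subset[-1,1]^n$, so $\|x\|_\infty\le1$ for every $x\in\partial B$, and $\dist(B,[-1,1]^n)=\|e_1+\dots+e_n\|_B\ge2$. Writing $a_k:=\|e_1+\dots+e_k\|_B$, I would record the elementary fact that $(a_k/k)_{k=1}^n$ is non-increasing, which follows from the identity $e_1+\dots+e_{k+1}=\tfrac1k\sum_{j=1}^{k+1}(e_1+\dots+e_{k+1}-e_j)$ together with $1$-symmetry and the triangle inequality. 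Then, with $\Event:=\bigcap_{k=1}^{\lceil n/2\rceil}\Event_k$, Lemma~\ref{l: prob core} and a union bound give $\Prob(\Event)\ge1-\lceil n/2\rceil\exp(-2n)\ge1-\exp(-n)$ for $n$ large, so it suffices to show that on $\Event$ the set $T\cup\bigcup_k\RandSet_k$ illuminates every $x\in\partial B$. Fix such an $x$, put $m:=\|x\|_\infty>0$, $J:=\{i\le n:\,|x_i|=m\}$, $t:=|J|\ge1$.

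In the regime $t\le\lceil n/2\rceil$ I would apply $\Event_t$ to the vector $y\in\{-1,0,1\}^n$ supported on $J$ with $y_i=-\sign(x_i)$ there; this yields a direction $d\in\RandSet_t$ equal to $-\sign(x_i)$ on every $i\in J$ and having exactly $t-1$ further coordinates, all equal to $\pm1$ and all outside $J$, the rest being zero. For $v\in\Gauss(B,x)$, set $\mu:=\min_{i\in J}|v_i|$: by Lemma~\ref{l: always positive}, $\sum_{i\in J}d_iv_i=-\sum_{i\in J}|v_i|\le-t\mu$, and by Lemma~\ref{l: ordering} every coordinate of $v$ off $J$ has absolute value at most $\mu$, so the extra coordinates of $d$ contribute at most $(t-1)\mu$; hence $\langle d,v\rangle\le-\mu$. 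If $\mu=0$, Lemma~\ref{l: ordering} forces $v$ to vanish off $J$, the extra coordinates of $d$ then contribute nothing, and $\langle d,v\rangle=-\|v\|_1<0$. Either way $\langle d,v\rangle<0$, so $d$ illuminates $x$ by Lemma~\ref{l: trivial criterion}. (This regime needs nothing about the distance to the cube.)

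In the regime $t>\lceil n/2\rceil$, so $2t>n$, I would first exploit the hypothesis. Zeroing the coordinates of $x$ outside $J$ produces a point of $B$ of $\|\cdot\|_B$-norm $m\,a_t$, whence $m\,a_t\le1$; since $(a_k/k)$ is non-increasing and $a_n\ge2$ we get $a_t\ge t\,a_n/n\ge2t/n>1$, so $m<1$. Consequently $|x_n|<1$, hence $\pm e_n\notin\Gauss(B,x)$ (indeed $\max_{z\in B}z_n=1$ and $\min_{z\in B}z_n=-1$, so $e_n,-e_n\in\Gauss(B,x)$ would force $x_n=\pm1$). I would then take $d\in T$ with $d_n=0$ and, for $i\le n-1$, $d_i=-\sign(x_i)$ if $x_i\ne0$ and $d_i=1$ otherwise. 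For $v\in\Gauss(B,x)$, setting $s:=n-|I_0^x|\ge t$ and $\mu:=\min\{|v_i|:x_i\ne0\}$, Lemmas~\ref{l: always positive} and~\ref{l: ordering} give $\langle d,v\rangle\le-(s-1)\mu+|I_0^x|\,\mu=(n-2s+1)\mu\le-\mu$ because $2s>n$; and if $\mu=0$ then $v$ vanishes on $I_0^x$, so $\langle d,v\rangle=-\sum_{i\le n-1,\,x_i\ne0}|v_i|$, which could equal $0$ only if $v=\pm e_n$, already excluded. So $d$ illuminates $x$, which would complete the proof.

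I expect the regime $t>\lceil n/2\rceil$ to be the crux. If a boundary point $x$ had both $t>\lceil n/2\rceil$ and $m=1$, then $\sign(x_i)e_i\in\Gauss(B,x)$ for every $i\in J$, and then \emph{no} direction in $T\cup\bigcup_k\RandSet_k$ could illuminate $x$: a direction in $T$ is zero in coordinate $n$, while a direction from $\RandSet_k$ pins down the signs of only $k\le\lceil n/2\rceil<t$ coordinates. It is exactly the hypothesis $\dist(B,[-1,1]^n)\ge2$, used through the monotonicity of $(a_k/k)$, that rules out $m=1$ in this regime; checking that this dovetails with the threshold $\lceil n/2\rceil$ chosen for the other case is the delicate point.
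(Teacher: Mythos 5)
Your proof is correct, and its architecture is the same as the paper's: condition on $\Event=\bigcap_{k\le\lceil n/2\rceil}\Event_k$, split the boundary points according to whether $t=|\{i:\,|x_i|=\|x\|_\infty\}|$ exceeds $\lceil n/2\rceil$, illuminate the small-$t$ points by a vector of $\RandSet_t$ produced by $\Event_t$ (using Lemmas~\ref{l: trivial criterion}, \ref{l: always positive}, \ref{l: ordering} and the count $t-1<t$, exactly as in the paper), and illuminate the large-$t$ points by a vector of $T$ after excluding $v=\pm e_n$ from $\Gauss(B,x)$. The one step where you genuinely diverge is how the hypothesis $\dist(B,[-1,1]^n)\ge 2$ is exploited: the paper derives $\|x\|_\infty<\|x\|_B$ from the chain $\|x\|_B/\|x\|_\infty\ge\bigl\|\sum_{i\le\lceil n/2\rceil+1}e_i\bigr\|_B\ge\tfrac12\|2e_1+e_2+\dots+e_n\|_B>\tfrac12\|e_1+\dots+e_n\|_B\ge 1$, which implicitly uses a majorization-type monotonicity of $1$-symmetric norms and a strict inequality left to the reader, whereas you zero out the coordinates off $J$ to get $m\,a_t\le 1$ and then use the monotonicity of $a_k/k$ (proved by the averaging identity) together with $a_n\ge2$ and $2t\ge n+2$ to conclude $a_t>1$, hence $m<1$ and $\pm e_n\notin\Gauss(B,x)$. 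Your route is more elementary and fully self-contained, and your explicit treatment of the degenerate case $\mu=0$ (both for the random direction and for the direction in $T$) makes precise a point the paper passes over quickly. One small correction: the justification ``because $2s>n$'' for $(n-2s+1)\mu\le-\mu$ is not quite enough (if $2s=n+1$ one only gets $\le 0$); what you actually have is $s\ge t\ge\lceil n/2\rceil+1$, hence $2s\ge n+2$, which does give the claimed bound, so the argument stands as written once this is cited.
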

\begin{proof}
Without loss of generality, we may assume that $\|e_i\|_B=1$.
First, we show that any vector $x\in\partial B$ with $|\{i\leq n:\,|x_i|=\|x\|_\infty\}|>\lceil n/2\rceil$
can be illuminated in a direction from $T$. Indeed, for any such vector $x$, since $\dist(B,[-1,1]^n)\geq 2$
and by the definition of the class $\Class_n$ and Lemma~\ref{l: ordering}, we necessarily have
$$\frac{\|x\|_B}{\|x\|_\infty}\geq \Bigl\|\sum_{i=1}^{\lceil n/2\rceil+1}e_i\Bigr\|_B
\geq \frac{1}{2}\bigl\|2e_1+e_2+e_3+\dots+e_n\bigr\|_B
>\frac{1}{2}\bigl\|e_1+e_2+\dots+e_n\bigr\|_B\geq 1.$$
So, $\|x\|_B>\|x\|_\infty$, whence for any $v\in \Gauss(B,x)$ we have $|I^v_0|\leq n-2$, and, in particular, $v\neq \pm e_n$.
Now, pick a vector $y=(y_1,y_2,\dots,y_n)\in T$ such that $y_i=-\sign(x_i)$ for
all $i\in [n-1]\setminus I^x_0$. For any $v\in \Gauss(B,x)$ we have
$$\langle y,v\rangle\leq -\sum_{i\in [n-1]\setminus I^x_0}|v_i|+\sum_{j\in I^x_0\setminus\{n\}}|v_j|.$$
By Lemma~\ref{l: ordering}, for any $i\in [n-1]\setminus I^x_0$ and $j\in I^x_0\setminus\{n\}$
we have $|v_i|\geq |v_j|$. Together with the obvious estimate $|[n-1]\setminus I^x_0|>|I^x_0\setminus\{n\}|$ and
the condition $v\neq \pm e_n$, this implies $\langle y,v\rangle<0$, i.e.\ $x$ is illuminated in direction $y$.

\medskip

Let events $\Event_k$ be defined as in Lemma~\ref{l: prob core}, and denote
$$\Event:=\bigcap_{k=1}^{\lceil n/2\rceil}\Event_k.$$
In view of Lemma~\ref{l: prob core}, $\Prob(\Event)\geq 1-\exp(-n)$,
provided that $n$ is sufficiently large.
For the rest of the proof, we fix realizations $x^\ell$ and
$\proj^{(2k-1)}_\ell$ of vectors $X^{\ell}$ and projections $\Proj^{(2k-1)}_\ell$, respectively,
($\ell=1,2,\dots$; $k\leq \lceil n/2\rceil$) from the event $\Event$.

Take any $x\in\partial B$ which is not illuminated in directions from $T$. By the above argument,
the set
$$J^x:=\bigl\{i\leq n:\,|x_i|=\|x\|_\infty\bigr\}$$
has cardinality at most $\lceil n/2\rceil$. Take $k:=|J^x|$. Then, applying the definition of $\Event$
to the vector $y:=-\sum_{i\in J^x}\sign(x_i)e_i$, we get that there is
$\ell\leq 2^n/n^2$ such that $\langle x^\ell,e_i\rangle=-\sign(x_i)$
for all $i\in J_x$ and the image of $\proj^{(2k-1)}_\ell$
contains $\spn\{e_i\}_{i\in J_x}$. Denote $\widetilde y:=\proj^{(2k-1)}_\ell(x^\ell)$.
We will show that $x$ is illuminated in direction $\widetilde y$.
Indeed, take any $v=(v_1,v_2,\dots,v_n)\in \Gauss(B,x)$.
Then
$$\langle \widetilde y,v\rangle\leq-\sum_{i\in J_x}|v_i|+\sum_{i\in [n]\setminus (J_x\cup I^{\widetilde y}_0)}|v_i|.$$
Note that by Lemma~\ref{l: ordering} we have $|v_i|\leq |v_j|$ for all $i\in [n]\setminus J_x$
and $j\in J_x$. Further, by the construction of $\widetilde y$ we have $|[n]\setminus (J_x\cup I^{\widetilde y}_0)|=k-1<|J_x|$.
Hence, $\langle \widetilde y,v\rangle$ is strictly negative. It remains to apply Lemma~\ref{l: trivial criterion}.

Thus, the convex body $B$ is illuminated by the union of directions $T\cup\bigcup_{k=1}^{\lceil n/2\rceil}\RandSet_{k}$
with probability at least $1-\exp(-n)$,
and the proof is complete.
\end{proof}

\begin{rem}
For the sake of keeping the presentation transparent, we did not attempt to compute the lower bound
for the dimension $n$ for which the proof starts to work. Neither did we try to decrease the
cardinality of the illuminating set. It is natural to ask whether the above argument can be
generalized to deal with ``$1$-unconditional'' bodies, i.e.\ convex bodies symmetric with respect to
coordinate hyperplanes. Unfortunately, our proof seems to use the permutation invariance in a crucial way,
and some essential new ingredients are needed.
\end{rem}

\noindent\small{Konstantin Tikhomirov,}\\
\small{University of Alberta, Canada}\\
\small{Current address: Princeton University, NJ}\\
\small{e-mail: kt12@math.princeton.edu}

\end{document}